\numberwithin{equation}{section}
\numberwithin{figure}{section}
\theoremstyle{plain}
\newtheorem{theorem}{ Theorem}[section]
\newtheorem{lemma}[theorem]{Lemma}
\newtheorem{corollary}[theorem]{Corollary}
\newtheorem{remark}[theorem]{Remark}
\newtheorem{definition}[theorem]{Definition}
\newcommand{\capas}{\mathop{\rm cap}\nolimits}
\newcommand{\cH}{{\mathcal{H}}}
\newcommand{\cU}{{\mathcal{U}}}
\begin{document}

\title{Spectral gaps for the linear  surface wave model in  periodic channels}

\author{F.L. Bakharev
, K. Ruotsalainen, J. Taskinen}

\maketitle

{\it 
Chebyshev Laboratory, St. Petersburg State University, 14th Line, 29b, Saint Petersburg, 199178 Russia\footnote{The first named author was supported by the St. Petersburg State University
grant  6.38.64.2012 as well as by the Chebyshev Laboratory - RF Government grant 11.G34.31.0026 and by JSC ``Gazprom Neft''. The first and third named authors were also supported by the Academy of Finland project "Functional analysis and applications".}

University of Oulu, Department of Electrical and Information 
Engineering,
Mathematics Division, P.O. Box 4500, FI-90401 Oulu, Finland 

University of Helsinki, Department of 
Mathematics and Statistics,
P.O. Box 68, FI-00014 Helsinki, Finland. }

\begin{abstract} 
We consider the linear water-wave problem in a periodic channel which consists of infinitely many identical containers connected with apertures of width $\epsilon$. Motivated by applications to surface wave propagation phenomena,
we study the band-gap structure of the 
continuous spectrum. We show that  for small apertures there exists a large number of gaps and also find asymptotic formulas for the position of the gaps as $\epsilon \to 0$: the endpoints are determined within corrections of order $\epsilon^{3/2}$. The width of the first bands is shown
to be $O(\epsilon)$. Finally, we give a sufficient condition which guarantees that the spectral bands do not degenerate into eigenvalues of 
infinite multiplicity. 
\end{abstract}


\bigskip

\section{Introduction}
\label{sec1}
\subsection{Overview of the results}
\label{sec:1.1}

Research on wave propagation phenomena in periodic media has been very active during  many decades. The topics and applications include for example photonic crystals, meta-materials, Bragg gratings of surface plasmon polariton waveguides, energy harvesting in piezoelectric materials as 
well as surface wave propagation in periodic channels, which is the subject of this paper. A standard mathematical approach consists of linearisation and posing a spectral problem for an elliptic,
hopefully self-adjoint, equation or system.

Early on it was noticed that waves propagating in periodic media have spectra with allowed bands separated by forbidden frequency gaps. This phenomenon was first discussed by Lord Rayleigh \cite{LordRa}.  It has also attracted some interest in coastal engineering because it provides a possible means of protection against wave damages \cite{Mat,Mei}, for example by varying the bottom topography by periodic arrangements of sandbars. The existence of forbidden frequencies is conventionally related to Bragg reflection of water waves by periodic structures. 
Here, Bragg reflection  is an enhanced reflection which occurs when the wavelength of an incident surface wave is approximately twice the wavelength of the periodic structure. This mechanism works, if the waves are relatively long so that the depth changes can effect them \cite{Mei}.

A similar phenomenon may also happen, when waves are propagating along a channel with periodically 
varying width. In \cite{Liu}, and later \cite{McKee}, the authors studied a channel, the wall of  which had a periodic stepped structure. Using  resonant interaction theory they were able to verify that significant wave reflection could occur. These results  are  based on the assumption of  small wall irregularities.

Gaps in the continuous spectrum for equations or  systems in unbounded waveguides have been studied in many papers, and we refer to \cite{Kuch} for an introduction to the topic. 
In \cite{NaRuTa} the authors studied the linear elasticity system and proved the existence of arbitrarily (though still finitely) many gaps, the number of them depending on a small geometric parameter;  the approach  is similar to  Section 3.1, below, and the result is analogous to Corollary \ref{cor3.2}.
In the setting of the linear water-wave problem, spectral gaps have been studied
in \cite{lin}, \cite{McI}, \cite{na460} and \cite{CaMI}, though the
point of view  is different from  the present work.

In this paper we consider  surface wave propagation using the linear water wave
equation with spectral Steklov boundary condition on the free water surface, see the equations \eqref{problem}--\eqref{b-cond}, which are called the original problem here. The water-filled domain $\Pi^\epsilon$ forms an unbounded periodic channel consisting of infinitely many identical bounded containers connected by apertures of width $\epsilon >0$, see
Figure \ref{fig1}. The first results, Theorem \ref{th3.1} and Corollary \ref{cor3.2} show 
that the essential spectrum $\sigma$ of the original problem (which is expected to be non-empty due to the unboundedness of the domain) has gaps, and the number of them can be made arbitrarily large depending on the parameter $\epsilon$. An explanation of this phenomenon  can be outlined rather simply using the 
Floquet-Bloch theory, though a lot of technicalities will eventually be involved. Namely, if $\epsilon = 0$, the domain becomes a disjoint union of infinitely many bounded containers, and the water-wave problem reduces to a problem on a bounded domain (we call it the limit problem), hence it has a discrete spectrum consisting of an increasing sequence of eigenvalues $(\Lambda_k^0)_{k=1}^\infty$. On the other hand, for $\epsilon > 0$, one can use the Gelfand transform to render the original problem into another bounded domain problem depending on the additional parameter $\eta \in [0, 2\pi)$. For each fixed $\eta$ this problem again has a sequence of eigenvalues $(\Lambda_k^\epsilon(\eta))_{k=1}^\infty$. Moreover, by results of \cite{na17}, \cite{NaPl}, Theorem 3.4.6, and \cite{NaSpec}, Theorem 2.1,
the essential spectrum $\sigma$ of the   problem \eqref{problem}--\eqref{b-cond} 
equals 
\begin{equation}
\sigma = \bigcup_{k=1}^\infty \Upsilon_k^\epsilon \ , \ \ 
\Upsilon_k^\epsilon = \{  \Lambda_k^\epsilon (\eta) \, : \,
\eta \in [0,2 \pi) \} ,
\label{eq1.1}
\end{equation}
where the sets $\Upsilon_k^\epsilon$ are subintervals of the positive real axis, or bands of the spectrum. (For the use of this so called Bloch spectrum in other problems, see for example \cite{FG}, or \cite{AC}.) In general, those bands may overlap making $\sigma$ connected, but in   Theorem \ref{th3.1} we obtain asymptotic estimates  for the lower and upper endpoints of $\Upsilon_k^\epsilon$: we show
that   $\Lambda_k^0 \leq \Lambda_k^\epsilon(\eta) \leq \Lambda_k^0 + C_k \epsilon$
for all $k$ and $\eta$ and  for some constants $C_k >0$. In view of \eqref{eq1.1} this implies the existence of a spectral gap between $\Upsilon_k^\epsilon$ and $\Upsilon_{k+1}$ for small $\epsilon $ and  $k$ such that $\Lambda_k^0 \not= \Lambda_{k+1}^0$. However, since the estimates depend also on $k$, we can only open a gap for finitely many $k$, though the number of gaps tends to infinity as $\epsilon \to 0$.

The asymptotic position (as $\epsilon \to 0$) of the gaps is determined more accurately in Theorems \ref{th4.5} and  \ref{cor4.6}: those main results
state that 
\begin{equation*}
\Upsilon_k^\epsilon=(\Lambda_k^0+A_k\epsilon+O(\epsilon^{3/2}), \Lambda_k^0+B_k\epsilon+O(\epsilon^{3/2}))
\end{equation*}
where the numbers $A_k \leq  B_k$ depend linearly on the three dimensional
capacity of the set $\theta$. 
This  result also ensures that in case $A_k \not= B_k$ the bands $\Upsilon_k^\epsilon$ do not degenerate into single points, which means that the spectrum of the original problem indeed has a genuine band-gap structure. 
Facts concerning the numbers $A_k$, $B_k$ are discussed after Theorem \ref{cor4.6}.

As for the structure of this paper, we recall in Section \ref{sec:1.2} the  exact formulation of the linear water-wave problem, its variational formulation as well as the parameter dependent problem arising from the Gelfand transform, and the limit problem. Section \ref{sec:3} contains the formal asymptotic analysis which relates the spectral properties of the original problem with the limit problem and which is rigorously justified in Secion \ref{sec:4}. The main results, Theorems \ref{th3.1},
\ref{th4.5} and  \ref{cor4.6}  as well as Corollary \ref{cor3.2}  are also given in Section \ref{sec:4}.  The proofs are based on the  max-min principle and construction of suitable test functions adjusted to the geometric characteristics of the domains under study.

Acknowledgement. The authors want to thank Prof. Sergey A. Nazarov for many discussions on the topic of this work.

\subsection{Formulation of the problem, operator theoretic tools}
\label{sec:1.2}
Let us proceed with the exact formulation of the problem. 
We consider an infinite periodic channel \(\Pi^\epsilon\) (see \eqref{channel}), consisting of water containers connected by small apertures of diameter $O(\epsilon)$. The coordinates of the points in the channel are denoted by \(x=(x_1,x_2,x_3)=(y_1,y_2,z) = (y,z) \), and \(x'=(x_2,x_3)\) stands for the projection of $x$ to the plane \(\{x_1=0\}\). We choose the coordinate system in such a way that
the axis of the channel is in \(x_1\)-direction and the free surface is in the plane \(\{x_3=0\}\).

\begin{figure}
\begin{center}
 \includegraphics[width=10cm]{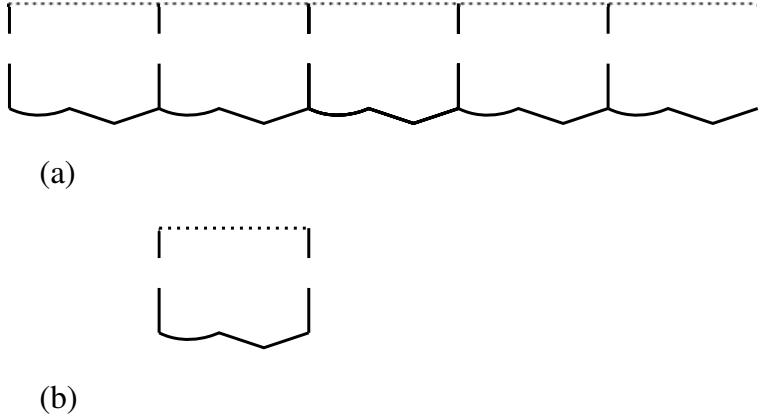}
\end{center}
\caption{Side view of the waveguide (a) and of the periodicity cell (b)}\label{fig1}
\end{figure}

\begin{definition} \rm
We describe the geometric assumptions on the periodicity cell in detail, as well as  some
related technical tools including the cut-off funcions. Let us  
denote by $\varpi_\bullet\subset {\mathbb R}^3$ a domain with a Lipschitz boundary and compact closure such that its intersections with \(\{x_1=0\}\)- and \(\{x_1=1\}\)-planes are simply connected planar
domains with positive area and  contain the points \({P^0}=(0,P_2,P_3)\) and \({P^1}=(1,P_2,P_3)\) 
with $P_3 < 0$, respectively; these points are fixed throughout the paper. 
Then the periodicity cell and its translates  are defined by setting (see Figure \ref{fig1})
\begin{equation}
\label{baths}
\varpi=\{x\in \varpi_\bullet: x_3<0, x_1\in (0,1)\},\quad
\varpi_j=\{x: (x_1-j,x_2,x_3)\in \varpi\}, \ j \in {\mathbb Z}.
\end{equation}
Furthermore, we assume that the set $\theta\subset {\mathbb R^2}$ is a bounded planar domain  containing the origin $(0,0)$ and that the boundary $\partial \theta$ is at least $C^2$-smooth.
We assume that $\theta$ is so small that the set $\{ 0 \} \times \overline{ \big( 2 \theta +(P_2, P_3) \big) }$ is contained in $\partial \varpi$ and $\sup \limits_{(x_2,x_3) \in \theta} (x_3 + P_3) =: d_\theta < 0$.  We define the apertures between the container walls as the sets
\begin{equation}
\label{apertures}
\theta_j^\epsilon=\big\{x=(j,x'):\, \epsilon^{-1} \big(x'-(P_2,P_3)\big)\in \theta\big\}, 
\ j \in {\mathbb Z}.
\end{equation}
It is plain that $x_3 < 0$ for $x \in \theta_j^\epsilon$ for all $0 < \epsilon \leq 1$,
by the choice of $d_\theta$. We shall need at several places a  cut-off function 
\begin{equation}
\label{1.11}
\chi_\theta \in C_0^\infty (\mathbb{R}^3)  ,
\end{equation} 
which is equal to one in a neighbourhood  of the set $\{ 0 \} \times \overline{\theta}$
and vanishes  outside another compact neighbourhood  of $\{ 0 \} \times \overline{\theta}$. More precisely, we require that  
\begin{eqnarray}
& & \big( {\rm supp}\, (\chi_\theta ) + (0,P_2,P_3) \big) \cap
\{ x_1 = 0\} \subset \partial \varpi , \nonumber \\ 
& &  
\big( {\rm supp}\, (\chi_\theta ) + (0,P_2,P_3) \big) 
\ \cap \{ x_1 >  0\} \subset  \varpi 
\label{1.11ab}
\end{eqnarray}
(this is possible by the specifications made on $\theta$)  and $\chi_\theta(x)$ vanishes, 
if $|x_1| \geq 1/4 $ or $ x_3 + P_3 \geq d_\theta/2$. We also assume that $\partial_{x_1} \chi_\theta= 0$, when $x_1 = 0 $.
Furthermore,  denoting $\chi_j (x) =  \chi_\theta (x- P^j )$, it follows from the above
specifications that $\chi_j(x) = 0$, if $x_3 \geq d_\theta/2$; in particular
$\chi_j$ vanishes on the free water surface $\gamma$. 
Finally, we shall need the scaled cut-off functions  
\begin{equation}
\label{defex}
X^\epsilon_j = \chi_\theta( \epsilon^{-1}(x-P^j)) .  
\end{equation}
It is plain that also $X^\epsilon_j$ vanishes on $\gamma$ for $0 < \epsilon \leq 1$
and that $X^\epsilon_j(x) = 1$ for $x \in \theta_j^\epsilon$, $j=0,1$. 
\end{definition}

\begin{definition} \rm
The periodic water channel is defined by 
\begin{equation}
\label{channel}
\Pi^\epsilon=\bigcup_{j\in{\mathbb Z}} ({\varpi_j\cup \theta_j^\epsilon}) ,
\end{equation}
and it will be the main object of our investigation. The free surface of the channel is denoted by
$\Gamma^{\epsilon}=\partial \Pi^{\epsilon}\cap \{x_3=0\}$, and the wall and bottom part of the boundary is $\Sigma^\epsilon=\partial \Pi^\epsilon\setminus \overline{\Gamma^\epsilon}$.
The boundary of the isolated container $\varpi$, the periodicity cell,
consists of the free surface $\gamma$ and the wall and bottom $\sigma^\epsilon$ 
with two apertures $\theta_0^\epsilon$ and $\theta_1^\epsilon$. 
\end{definition}

\begin{remark}
\label{rem1.1}
\rm
We shall  use the following general  notation. 
Given a domain $\Xi $, the symbol  $(\cdot,\cdot)_\Xi$ stands for the natural scalar product in $L^2(\Xi)$, and $H^k(\Xi)$, $k\in \mathbb{N}$,  for the standard Sobolev space of order $k$  on $\Xi$.  The norm of a function $f$ belonging to a Banach function space $X$ is denoted by $\Vert f ; X \Vert$. 
 For $r >0$ and $a \in \mathbb{R}^N$, $B_r (a)$ (respectively, $S_r(a)$ ) stands  for the Euclidean ball (resp. ball surface) with centre $a$ and radius $r$. By $C,c$ (respectively, $C_k$, $c_k$, $C(k)$ etc.) we mean positive constans (resp. constants depending on a parameter $k$) which do not depend on  functions or variables appearing in the inequalities, but which may still vary from place to place.  The gradient and Laplace
operators $\nabla$ and $\Delta$ act in variable $x$, unless otherwise indicated. 
\end{remark}

In the framework of the linear  water-wave theory 
we consider the spectral Steklov problem in the channel $\Pi^{\epsilon}$,
\begin{eqnarray}
-\Delta u^\epsilon(x) =0 &\quad &\mbox{for all} \ x\in \Pi^\epsilon, \label{problem} \\
\label{b-cond-1}
\partial_nu^{\epsilon}(x)=0 &\quad &\mbox{for a.e.} \ x\in\Sigma^\epsilon,\\
\label{b-cond} 
\partial_z u^\epsilon(x)=\lambda^\epsilon u^\epsilon(x) &\quad &\mbox{for a.e.} \
x\in\Gamma^\epsilon.
\end{eqnarray}
Here  \(u^\epsilon\) is the 
velocity potential,  $\lambda^\epsilon=g^{-1}\omega^2$ is a spectral parameter related  to the
frequency of harmonic oscillations $\omega>0$ and  the acceleration of gravity $g$. 
By the geometric assumptions made above, the outward normal derivative $\partial_n$ is defined
almost everywhere on $\Sigma^\epsilon$. It coincides with
$\partial_z=\partial/\partial_z$ on the free surface~$\Gamma^\epsilon$. 

The rest of this section is devoted to presenting the operator theoretic tools which will be needed later to prove our results: Gelfand transform, variational formulation of the boundary value problems, and max-min-formulas for eigenvalues.
The spectral problem (\ref{problem})--(\ref{b-cond}) can be transformed into a 
family of spectral problems in the periodicity cell using the Gelfand transform. We briefly recall its definition:
\begin{equation}
\label{Gelfand}
v(y,z)\mapsto\ V(y,z,\eta)=\frac{1}{\sqrt{2\pi}}\sum_{j\in \mathbb{Z}}
\exp(-i\eta(z+j))v(y,z+j),
\end{equation}
where $(y,z)\in \Pi^\epsilon$ on the left while $\eta\in[0,2\pi)$ and
$(y,z)\in\varpi$ on the right. As is well known, the Gelfand transform establishes an
isometric isomorphism between the Lebesgue spaces,
\begin{equation*}
L^2(\Pi^\epsilon)\simeq L^2(0,2\pi; L^2(\varpi)),
\end{equation*}
where $L^2(0,2\pi;B)$ is the Lebesgue space of functions with values in the Banach space $B$ endowed with the norm
\begin{equation*}
\|V;L^2(0,2\pi; B)\|=\left(\int_0^{2\pi}\|V(\eta);B\|^2d\eta\right)^{1/2}\,.
\end{equation*}
The Gelfand transform is also an isomorphism from the Sobolev space $H^l(\Pi^\epsilon)$ onto 
$L^2(0,2\pi; H^l_{\epsilon,\eta}(\varpi))$ for \(l=1,2\).
The space $H^2_{\epsilon,\eta}(\varpi)$ consists of  Sobolev functions $u$
which satisfy the quasi-periodicity conditions 
\begin{eqnarray}
\label{qua-1}
u(0,x')=e^{-i\eta}u(1,x'), &\quad& (0,x')\in \theta_0^\epsilon, \\
\label{qua-2}
\partial_{x_1}u(0,x')=e^{-i\eta}\partial_{x_1}u(1,x'), &\quad& (0,x')\in \theta_0^\epsilon,
\end{eqnarray}
whereas  $H^1_{\epsilon,\eta}(\varpi)$ is the Sobolev space with the condition
\eqref{qua-1} only.

Applying the Gelfand transform to the differential equation \eqref{problem}
and to the boundary conditions \eqref{b-cond-1}--\eqref{b-cond}, we obtain a family of model problems
in the periodicity cell $\varpi$ parametrized by the dual variable $\eta$,
\begin{eqnarray}
\label{model}
-\Delta U^\epsilon(x;\eta)=0, &\quad& x\in \varpi, \\
\label{model-b-cond-1}
\partial_nU^\epsilon(x;\eta)=0, &\quad& x\in \sigma^\epsilon,\\
\label{model-b-cond-2}
\partial_zU^\epsilon(x;\eta)=\Lambda^\epsilon (\eta) U^\epsilon(x; \eta), &\quad& x\in \gamma, \\
\label{quasiperiodic-1}
U^\epsilon(0,x';\eta)=e^{-i\eta}U^\epsilon (1,x';\eta), 
&\quad& x\in \theta_0^\epsilon,           \\
\label{quasiperiodic-2}
\partial_{x_1}U^\epsilon(0,x';\eta)=e^{-i\eta}\partial_{x_1}U^\epsilon (1,x';\eta),
&\quad& x\in \theta_0^\epsilon.
\end{eqnarray}
Here,  $\Lambda^\epsilon = \Lambda^\epsilon (\eta)$ is a new notation for the spectral
parameter $\lambda^\epsilon$. More details on the use of the Gelfand-transform can be found e.g. in 
\cite{NaRuTa}, Section 2.  

The apertures disappear at $\epsilon=0$ so in that case the also quasi-periodicity conditions cease to exist. Hence,  we can consider the problem \eqref{model}--\eqref{quasiperiodic-2} as a singular perturbation of the limit spectral problem 
\begin{eqnarray}
\label{model-0}
-\Delta U^0(x)=0, &\quad& x\in \varpi, \\
\label{model-0-b-cond-1}
\partial_nU^0(x)=0, &\quad& x\in \sigma,\\
\label{model-0-b-cond-2}
\partial_zU^0(x)=\Lambda^0 U^0(x), &\quad& x\in \gamma
\end{eqnarray}
with $\Lambda^0$ as a spectral parameter.

Our approach to the spectral properties of model and limit problems  is 
similar to  \cite{NaTa}, Sections 1.2, 1.3.. We first write 
the variational form of the problem (\ref{model})--(\ref{quasiperiodic-2}) 
for the unknown function $U^\epsilon \in H_{\epsilon,\eta}^1(\varpi)$
 as
\begin{equation}\label{spectralvar}
(\nabla U^\epsilon,\nabla
V)_{\varpi}=\Lambda^\epsilon
(U^\epsilon,V)_\gamma \, , \  V\in H^1_{\epsilon,\eta}(\varpi) ,
\end{equation}
and the corresponding variational formulation of the limit problem for 
\(U\in H^1(\varpi)\)  reads as
\begin{equation}\label{var_limit}
(\nabla U,\nabla V)_{\varpi}=\Lambda (U,V)_\gamma  \, , \  V\in H^1(\varpi) .
\end{equation}
We denote by ${\mathcal{H}}^\epsilon$ the space $H_{\epsilon,\eta}^1 (\varpi)$ endowed with the new scalar product
\begin{equation}
(u,v)_\epsilon = (\nabla u , \nabla v)_{\varpi} 
+ (u,v)_\gamma , \label{1.25}
\end{equation}
and define  a self-adjoint, positive and compact operator ${\mathcal{B}}^{\epsilon}(\eta):
{\mathcal{H}}^\epsilon \to {\mathcal{H}}^\epsilon$ using
\begin{equation}
({\mathcal{B}}^{\epsilon}(\eta) u,v)_\epsilon = (u,v)_\gamma .\label{1.26}
\end{equation}
The problem \eqref{spectralvar} is then equivalent to the 
standard spectral problem
\begin{equation}
{\mathcal{B}}^{\epsilon}( \eta) u = M^\epsilon u \label{1.28}
\end{equation}
with another spectral parameter 
\begin{equation}
M^\epsilon = (1 + \Lambda^\epsilon)^{-1}.\label{1.35}
\end{equation}
Clearly, the spectrum of ${\mathcal{B}}^{\epsilon}(\eta)$ consist of 0 and a decreasing sequence $(M_k^{\epsilon}(\eta))_{k=1}^\infty$ of eigenvalues, which moreover can be calculated from the usual min-max formula
\begin{equation}
M_k^\epsilon (\eta) = \min\limits_{E_k} \max\limits_{v \in E_k} \frac{({\mathcal{B}}^{\epsilon}(\eta) v,v)_\epsilon }{(v,v)_\epsilon} ,
\end{equation}
where the minimum is taken over all subspaces $E_k \subset {\mathcal{H}}^\epsilon$  of co-dimension $k-1$.
Using \eqref{1.25} and \eqref{1.26}, we can write a max-min formula for the eigenvalues of the problem \eqref{spectralvar}:
\begin{eqnarray}
\label{max-min-2}
& & \Lambda_k^\epsilon(\eta) = \frac1{M_k^\epsilon(\eta)} -1 
= \max\limits_{E_k} \min\limits_{v \in E_k}\frac{(\nabla v , \nabla v)_{\varpi} 
+ (v,v)_\gamma }{(v,v)_\gamma }-1  \nonumber \\
& = & \max\limits_{E_k} \min\limits_{v \in E_k} \frac{ \Vert \nabla v ; L^2(\varpi) \Vert^2 }{\Vert v ;L^2(\gamma) \Vert^2 } 
\end{eqnarray}
On the other hand, the connection \eqref{1.35} and the properties of the sequence
\hfill\break
$(M_k^{\epsilon}(\eta))_{k=1}^\infty$ mean that the eigenvalues \eqref{max-min-2}
form an unbounded sequence
\begin{equation}
\label{seq}
0 \leq \Lambda_1^\epsilon(\eta) \le \Lambda_2^\epsilon(\eta)  \le \ldots\le\Lambda_k^\epsilon(\eta)\le\ldots\to+\infty .
\end{equation}
The eigenfunctions can be assumed to form an orthonormal basis in the space \(L^2(\varpi)\).
The functions $\eta\mapsto\Lambda_k^\epsilon(\eta)$ are continuous and
$2\pi$-periodic (see for example  \cite{Kato}, Ch.\,9). Hence the sets
\begin{equation}\label{sets}
\Upsilon_k^\epsilon=\{\Lambda_k^\epsilon(\eta):\eta\in [0,2\pi)\}
\end{equation}
are closed connected segments, which may degenerate into single points; their relation to the original problem was already mentioned in \eqref{eq1.1}.

The spectral concepts of the limit problem 
\eqref{model-0}--\eqref{model-0-b-cond-2} can be treated in the same way as in
\eqref{1.25}--\eqref{seq}. Since the quasi-periodicity conditions vanish for $\varepsilon=0$,
the space ${\mathcal{H}}^\epsilon$ is replaced by $H^1(\varpi)$; the norm induced by \eqref{1.25}  is now equivalent to the  original Sobolev norm of $H^1(\varpi)$. We denote by $\mathcal{B} : H^1(\varpi) \to
H^1(\varpi)$ the operator defined as in \eqref{1.25}--\eqref{1.26}. 
The limit problem  has an  eigenvalue sequence  \( (\Lambda^0_k)_{k=1}^\infty \) like \eqref{seq}, however, neither the eigenvalues nor the operator $\mathcal{B}$ depend on  \(\eta\) (cf. \cite{NaRuTa},
Section 3). The first eigenvalue $\Lambda_1^{0}$ equals $0$, and the first eigenfunction is the  constant function. Analogously to \eqref{max-min-2} we can write
\begin{equation}
\label{max-min-1}
\Lambda^0_k=\max_{F_k}\min_{v\in F_k}
\frac{\|\nabla v ;L^2(\varpi)\|^2}{\|v;L^2(\gamma)\|^2},
\end{equation}
where again $F_k \subset H^1(\varpi)$ is running over all  subspaces of codimension $k-1$.
We denote by 
\begin{equation}
\label{1.45}
( U^0_k)_{k = 1}^\infty
\end{equation} 
an $L^2(\gamma)$-orthonormal sequence of eigenfunctions corresponding to the
eigenvalues \eqref{max-min-1}.

\begin{lemma}
\label{rem1.6}
For all $k$ there exists a  constant $C_k > 0$  such that
\begin{equation}
|U_k^0 (x)| \leq C_k \ , \ |\nabla U_k^0 (x)| \leq C_k 
\end{equation}
for all $x \in {\rm supp}\, (\chi_j)\cap \varpi$, $j=0,1$ (and hence for all $x \in {\rm supp}\, (X^\epsilon_j)\cap \varpi$, $0 < \epsilon \leq 1$).
\end{lemma}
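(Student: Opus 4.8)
The plan is to establish pointwise bounds on the eigenfunctions $U_k^0$ and their gradients near the aperture points $P^0$ and $P^1$ by combining elliptic regularity with the geometric fact that the supports of the cut-off functions $\chi_j$ stay away from the free surface $\gamma$. The key observation is that each $U_k^0$ is a weak solution of the limit problem \eqref{model-0}--\eqref{model-0-b-cond-2}, hence harmonic in the interior of $\varpi$; the Steklov spectral condition appears only on $\gamma$, and by the specifications in the first Definition the set ${\rm supp}\,(\chi_j)\cap\varpi$ lies strictly below $\gamma$ (recall $\chi_j(x)=0$ whenever $x_3\geq d_\theta/2$). Thus, in a fixed neighbourhood containing ${\rm supp}\,(\chi_j)\cap\varpi$ for $j=0,1$, the function $U_k^0$ satisfies the homogeneous Neumann problem for the Laplacian with no spectral boundary term, and the relevant portion of $\partial\varpi$ there is the smooth wall $\sigma$.

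The concrete steps are as follows. First I would normalise: by \eqref{1.45} the $U_k^0$ are $L^2(\gamma)$-orthonormal, and from the variational identity \eqref{var_limit} together with the max-min characterisation \eqref{max-min-1} one gets a bound on $\Vert\nabla U_k^0;L^2(\varpi)\Vert^2=\Lambda_k^0$, whence a uniform $H^1(\varpi)$ bound depending only on $k$. Second, I would invoke interior and boundary elliptic regularity: since $-\Delta U_k^0=0$ in $\varpi$ and $\partial_n U_k^0=0$ on the $C^2$ (in fact Lipschitz, but locally smooth near $P^j$, by the assumption that a neighbourhood of $\{0\}\times\overline{2\theta+(P_2,P_3)}$ lies in $\partial\varpi$ and $\partial\theta$ is $C^2$) portion $\sigma$, a Caccioppoli/bootstrap argument upgrades the $H^1$ bound to an $H^2$ or higher Sobolev bound on a slightly larger subdomain that still excludes $\gamma$. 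Third, a Sobolev embedding in three dimensions ($H^m\hookrightarrow C^1$ for $m\geq 3$, or more economically $W^{2,p}\hookrightarrow C^1$ for $p>3$) then yields the desired $C^1$ bound, giving $\vert U_k^0(x)\vert\leq C_k$ and $\vert\nabla U_k^0(x)\vert\leq C_k$ on the compact set ${\rm supp}\,(\chi_j)\cap\varpi$. The final clause about $X_j^\epsilon$ is immediate, since by \eqref{defex} and the support condition \eqref{1.11ab} one has ${\rm supp}\,(X_j^\epsilon)\cap\varpi\subset{\rm supp}\,(\chi_j)\cap\varpi$ for all $0<\epsilon\leq 1$, so the same constant $C_k$ works.

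The main obstacle I anticipate is ensuring that the elliptic regularity is applied on a region where the boundary is genuinely smooth and free of the Steklov condition, and keeping the estimates uniform. The subtlety is twofold: the global boundary $\partial\varpi$ is only Lipschitz, so I cannot expect $U_k^0\in H^2(\varpi)$ globally, but near $P^j$ the hypotheses guarantee a $C^2$ wall, which is exactly where the cut-off support sits, so a \emph{local} regularity estimate suffices. I would formalise this by fixing an auxiliary cut-off $\zeta$ equal to one on a neighbourhood of ${\rm supp}\,(\chi_0)\cup{\rm supp}\,(\chi_1)$ intersected with $\varpi$ and supported in the smooth, below-surface region, then deriving the regularity estimate for $\zeta U_k^0$. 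The constant $C_k$ then depends on $k$ through $\Lambda_k^0$ and on the fixed geometry, but crucially not on $\epsilon$, which is precisely what the later asymptotic constructions require.
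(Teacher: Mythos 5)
Your proposal is correct and follows essentially the same route as the paper: a local elliptic regularity bootstrap (the paper cites Agmon--Douglis--Nirenberg, applied on nested smooth domains $G_0\subset G_1\subset G_2$ chosen below the free surface with $G_2\cap\{x_1=0\}\subset\partial\varpi$, which plays exactly the role of your auxiliary cut-off $\zeta$) followed by the Sobolev embeddings $H^2\hookrightarrow C_B$ and $H^3\hookrightarrow C_B^1$. Your explicit preliminary step deriving the $H^1(\varpi)$ bound from the $L^2(\gamma)$-normalisation and \eqref{max-min-1} is left implicit in the paper but is a sound way to launch the bootstrap.
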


\begin{proof} Let for example $j=0$ (the other case is treated similarly), and define the domains 
$G_1, G_2 \subset {\mathbb{R}^3} $ with $C^\infty$ boundary such that $\overline{G_0 } :=  {\rm supp}\, (\chi_j) 
\subset G_1 \subset \overline{ G_1  }  \subset G_2 \subset \{ x_3 < 0\}$ and $G_2$ still so
small that 
\begin{equation}
\label{1.45a}
G_2 \cap \{x_1 = 0\} \subset \partial \varpi \ \ {\rm and} \ \ 
\overline{G_2} \cap \{x_1 >  0\} \subset  \varpi .
\end{equation} 
As a consequence, these domains are smooth enough so that we can use the local 
elliptic estimates \cite{ADN}, Theorem 15.2, to the solutions  $U_k^0$ of the equation \eqref{model-0}: this yields
for every $l= 1,2,\ldots$, a constant $C_{l,k} >0$ such that 
\begin{equation}
\Vert U_k^0 ; H^{l+1}(G_n \cap \varpi) \Vert \leq C_{l,k} \big( \Vert  U_k^0 ; H^{l-1}(G_{n+1} \cap \varpi) \Vert
+ \Vert  U_k^0 ; L^2 (G_{n+1} \cap \varpi) \Vert \big)  \nonumber
\end{equation}
for $n=0,1$. Applying this first with $n=1$ and $l=1$ we get a bound
for $\Vert U_k^0 ; H^{2}(G_1 \cap \varpi) \Vert $ and then, with $n=0$ and $l=2$, for $\Vert U_k^0 ; H^{3}(G_0 \cap \varpi) \Vert $. The standard embeddings
$H^2( G_1 \cap \varpi) \subset C_B (G_0 \cap \varpi)$ and  $H^3(G_0\cap \varpi) \subset C_B^1 (G_0\cap \varpi)$
imply the result. \ \ $\Box$
\end{proof}

\section{The formal asymptotic procedure}
\label{sec:3}
\subsection{The case of a simple eigenvalue}
\label{sec:3.1}
 
To describe the asymptotic behaviour (as $\epsilon \to 0$) of the eigenvalues  \(\Lambda_k^\epsilon(\eta)\)
of the problem \eqref{model}-\eqref{quasiperiodic-2} we consider first  the case $\Lambda_k^0$ is a simple eigenvalue of the problem 
\eqref{model-0}-\eqref{model-0-b-cond-2} for some fixed $k$.
Let us  make the following \textit{ansatz}:
\begin{equation}\label{ansatz1}
\Lambda_k^\epsilon(\eta)=\Lambda_k^0+\epsilon\Lambda_k'(\eta)+
\widetilde{\Lambda}_k^\epsilon(\eta),
\end{equation}
where 
\(\Lambda_k'(\eta)\) is a correction term and 
\(\widetilde{\Lambda}_k^\epsilon(\eta)\) a small remainder to be evaluated and estimated. In this 
section we derive  the expression \eqref{2.20} for $\Lambda_k'(\eta)$, cf. also \eqref{L-mult-1} and \eqref{L-mult-2}, and the 
remainder will be  treated in Section \ref{sec:4.2}

The corresponding asymptotic ansatz for the eigenfunction reads as follows:
\begin{eqnarray}
\label{ansatz2}
U_k^{\epsilon}(x;\eta)&=&U_k^0(x)\\
&+& \chi_0(x)w_{k0}(\epsilon^{-1}(x-{P^0}))+ \chi_1(x)w_{k1}(\epsilon^{-1}(x-{P^1}))\nonumber\\
&+& \epsilon U_k'(x;\eta)+\widetilde{U}_k^\epsilon(x;\eta),\nonumber
\end{eqnarray}
where \( ( U^0_k)_{k = 1}^\infty\) is as in \eqref{1.45}.
The functions \(w_{k0}\) and \(w_{k1}\) are of boundary layer type, and \(\chi_j\)
is given above \eqref{defex}.

The boundary layers \(w_{kj}\) depend on the ``fast'' variables 
(``stretched'' coordinates)
\begin{equation*}
\xi^{j}=(\xi^j_1,\xi^j_2,\xi^j_3)=\epsilon^{-1}(x-{P^j}),\quad j=0,1.
\end{equation*} 
They are needed to compensate the fact that the leading term \(U^0_k\) 
in the expansion \eqref{ansatz2} does not satisfy the quasi-periodicity conditions
\eqref{quasiperiodic-1}--\eqref{quasiperiodic-2}.
By Lemma \ref{rem1.6} and the mean value theorem, the eigenfunction \(U_k^0(x)\) has the representation 
\begin{equation*}
U_k^0(x)=U_k^0({P^j})+O(\epsilon), \quad x\in \theta_j^\epsilon
\end{equation*}
near the points \({P^j},\,j=0,1\). We look for 
 \(w_{k0}\) and \(w_{k1}\) as  the solutions of the
problems 
\begin{eqnarray*}
\Delta_{\xi^0} w_{k0}(\xi^{0})=0, &\quad& \xi^0_1>0,\\
\partial_{\xi^0_1} w_{k0} (\xi^0)=0, &\quad& 
\xi^0\in \{0\}\times ({\mathbb R}^2\setminus \overline{\theta}), \\
w_{k0}(\xi^0)=a_{k0}, &\quad& \xi^0\in\{0\}\times \theta,
\end{eqnarray*}
and
\begin{eqnarray*}
\Delta_{\xi^1} w_{k1}(\xi^1)=0, &\quad& \xi^1_1<0,\\
\partial_{\xi^1_1} w_{k1} (\xi^1)=0, &\quad& 
\xi^1\in \{0\}\times ({\mathbb R}^2\setminus \overline{\theta}), \\
w_{k1}(\xi^1)=a_{k1}, &\quad& \xi^1\in\{0\}\times \theta\,
\end{eqnarray*}
in the half spaces \(\{\xi^0_1>0\}\) and \(\{\xi^1_1<0\}\), respectively;
 the meaning of  the numbers $a_{kj}$ will be explained below. 
Both of the functions \(w_{kj},\,j=0,1,\) can be extended to even harmonic functions 
in the exterior of the set \(\{0\}\times\theta\):
\begin{eqnarray}
\label{harmonic}
\Delta_{\xi^j} w_{kj}(\xi^j)=0, &\quad& \xi^j\in {\mathbb R}^3\setminus (\{0\}\times\overline{\theta}),\\
w_{kj}(\xi^j)=a_{kj}, &\quad& \xi^j\in \partial (\{0\}\times\overline{\theta}) \,.\nonumber
\end{eqnarray}
Furthermore, the problem (\ref{harmonic}) admits a solution (see \cite{PolyaSzego}) 
\begin{eqnarray}
\label{layer0}
& & w_{kj}(\xi^j)=a_{kj}\frac{\capas_3 \theta}{|\xi^j|}+\widetilde{w}_{kj}(\xi^j), \\
\label{new2}
& & \widetilde{w}_{kj}(\xi^j)= O(|\xi^j|^{-2})\ , 
\ \nabla_{\xi^j} \widetilde{w}_{kj}(\xi^j)= O(|\xi^j|^{-3}),
\end{eqnarray} 
where \(\capas_3 (\theta)\) is the 3-dimensional capacity of the set \(\{0\}\times\theta\) and \eqref{new2} concerns large $\xi^j$-behaviour.
Moreover, the solution has a finite Dirichlet integral: 
\begin{eqnarray}
& & \int_{{\mathbb R}^3} \big| \nabla_{\xi^j } w_{kj} (\xi^j) \big|^2 d\xi^j 
\leq C 
\label{new1} 
\end{eqnarray} 
for some constant $C>0$. 

We aim to choose the coefficients \(a_{kj}\)  such that $U_k^\epsilon$ 
satisfies the quasi-periodicity conditions 
\eqref{quasiperiodic-1}--\eqref{quasiperiodic-2} . Clearly, for each $\epsilon>0$
\begin{eqnarray*}
U_k^\epsilon({P^0}; \eta)&=&e^{-i\eta}U_k^\epsilon({P^1}; \eta),\\
\partial_{x_1}U_k^\epsilon({P^0};\eta)&=&e^{-i\eta}\partial_{x_1}U_k^\epsilon({P^1};\eta),
\end{eqnarray*}
which together with the asymptotic expansion \eqref{ansatz2} yield the relations
\begin{equation*}
U_k^0({P^0})+a_{k0}=e^{-i\eta}(U_k^0({P^1})+a_{k1}) \mbox{ and } a_{k0}=-e^{-i\eta}a_{k1}
\end{equation*}
for the coefficients. Hence, 
\begin{equation}
\label{ab}
a_{k1}=-e^{i\eta} a_{k0}, \quad
a_{k0}=\frac{1}{2}\left(e^{-i\eta}U_k^0({P^1})-U_k^0({P^0})\right).
\end{equation}

Now we can write a model problem for the main asymptotic correction term \(U_k'\):
\begin{eqnarray}
\label{U'-eq}
-\Delta U_k' (x;\eta)= \Delta 
W_k(x) 
&\quad& x\in \varpi, \\
\label{U'-b-cond-1}
(\partial_z-\Lambda_k^0)U_k'(x;\eta)=\Lambda_k'(\eta)U_k^0(x), &\quad& x\in\gamma,\\
\label{U'-b-cond-2}
\partial_n U_k'(x ; \eta)=0, &\quad& x\in \sigma\, ,
\end{eqnarray}
where we denote 
\begin{equation}
W_k(x) = \left(\sum_{j=0}^1  \chi_j(x)\frac{a_{kj}\capas_3(\theta)}{|x-{P^j}|}\right) 
, \quad x\in \varpi,. \label{2.19ce}
\end{equation}
In addition to $U_k'$, the problem \eqref{U'-eq}--\eqref{U'-b-cond-2}
will also determine the number  \(\Lambda_k'(\eta)\) in a unique way for every $k$ and $\eta$.
This will follow by requiring the  solvability condition to hold in the Fredholm alternative, see Lemma \ref{lem2.1} and its proof,  below. 
Indeed, using the Green formula
and the  normalization in \eqref{1.45} we write ($ds $ is the surface measure): 
\begin{eqnarray}
&&\Lambda_k'(\eta)=\Lambda_k'(\eta)\|U_k^0;L^2(\gamma)\|^2 = 
\nonumber \\
&=&\int_\gamma\left( \partial_z U_k'(x ;\eta)- 
\Lambda_k^0 U_k'(x;\eta)\right)\overline{U_k^0(x)}\,ds(x)= 
\nonumber  \\
&=&\int_{\partial\varpi} 
\left( \overline{U_k^0(x)}
\partial_n U_k'(x;\eta) -
U_k'(x;\eta)\overline{\partial_n U_k^0(x) }
\right)\,ds(x)= 
\nonumber \\
&=&\int_\varpi \overline{U_k^0(x)} \Delta U_k'(x;\eta)=
- \int_\varpi \overline{U_k^0(x)} \Delta W_k(x)
\,dx\,.  
\label{2.19cd}
\end{eqnarray}
Taking into account that the last integral converges absolutely and using the Green formula again yield
\begin{eqnarray*}
\Lambda_k'(\eta)
&=&\lim_{r\to 0} 
\sum_{j=0}^1\int_{S_r({P^j})\cap\varpi}\overline{U_k^0(x)}
\partial_n\left(-\frac{a_{kj}\capas_3(\theta)}{|x-{P^j}|}\right)\,ds(x)\\
&=& -2\pi\capas_3(\theta)\left(a_{k0}\overline{U_k^0({P^0})}+
a_{k1}\overline{U_k^0({P^1})}\right);
\end{eqnarray*}
see \eqref{2.19ce} and Remark \ref{rem1.1} for notation. 
According to \eqref{ab} we finally obtain
\begin{equation}
\Lambda'_k(\eta)=
\pi\capas_3(\theta) |U_k^0({P^0})-e^{-i\eta}U_k^0({P^1})|^2\,.
\label{2.20}
\end{equation}

\begin{lemma}
\label{lem2.1}
Choosing $\Lambda_k'(\eta)$ as in \eqref{2.20},
the problem \eqref{U'-eq}--\eqref{U'-b-cond-2} has a 
solution $U_k' \in H^1(\varpi)$.
\end{lemma}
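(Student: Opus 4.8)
The plan is to recast \eqref{U'-eq}--\eqref{U'-b-cond-2} as a single equation for the compact self-adjoint operator $\mathcal{B}$ of \eqref{1.25}--\eqref{1.26} and then apply the Fredholm alternative, the decisive point being that the resulting solvability condition will coincide exactly with the identity \eqref{2.19cd} that was used to define $\Lambda_k'(\eta)$ in \eqref{2.20}. First I would pass to the variational form: testing $-\Delta U_k'=\Delta W_k$ against $\overline{V}$, integrating by parts, and inserting the boundary conditions \eqref{U'-b-cond-1}--\eqref{U'-b-cond-2} yields, for every $V\in H^1(\varpi)$,
\[
(\nabla U_k',\nabla V)_\varpi-\Lambda_k^0(U_k',V)_\gamma=\ell(V):=(\Delta W_k,V)_\varpi+\Lambda_k'(\eta)(U_k^0,V)_\gamma .
\]
Writing $(\nabla U_k',\nabla V)_\varpi=(U_k',V)_0-(U_k',V)_\gamma$ with $(u,v)_0:=(\nabla u,\nabla v)_\varpi+(u,v)_\gamma$ and invoking \eqref{1.26} together with \eqref{1.35} (so that $M_k^0=(1+\Lambda_k^0)^{-1}$), this becomes equivalent to $(\mathcal{B}-M_k^0 I)U_k'=-(1+\Lambda_k^0)^{-1}G$, where $G\in H^1(\varpi)$ is the $(\cdot,\cdot)_0$-Riesz representative of the functional $\ell$.

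Before invoking Riesz I must check that $\ell$ is bounded on $H^1(\varpi)$. The only delicate term is $(\Delta W_k,V)_\varpi$: although $W_k$ from \eqref{2.19ce} is singular at the points $P^j$, the cut-off $\chi_j$ equals $1$ in a neighbourhood of $P^j$, where $|x-P^j|^{-1}$ is harmonic, so the Laplacian falls only on the derivatives of $\chi_j$. Consequently $\Delta W_k$ is in fact a bounded function supported in the annular region where $\nabla\chi_j\neq0$; in particular $\Delta W_k\in L^2(\varpi)$, and combined with the trace inequality for the $(\cdot,\cdot)_\gamma$ term this gives $|\ell(V)|\le C\Vert V;H^1(\varpi)\Vert$. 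Since $\mathcal{B}$ is compact and self-adjoint, $\mathcal{B}-M_k^0 I$ is Fredholm of index zero, and by the assumed simplicity of $\Lambda_k^0$ its kernel is the one-dimensional span of $U_k^0$. The Fredholm alternative then asserts that the equation is solvable in $H^1(\varpi)$ if and only if the right-hand side is $(\cdot,\cdot)_0$-orthogonal to $U_k^0$, i.e. if and only if $\ell(U_k^0)=0$.

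It remains to verify this single scalar condition. By the $L^2(\gamma)$-normalization in \eqref{1.45} we have $\ell(U_k^0)=(\Delta W_k,U_k^0)_\varpi+\Lambda_k'(\eta)$, and the chain of Green identities in \eqref{2.19cd} shows precisely that $\Lambda_k'(\eta)=-\int_\varpi\overline{U_k^0}\,\Delta W_k\,dx=-(\Delta W_k,U_k^0)_\varpi$; hence $\ell(U_k^0)=0$ for the choice \eqref{2.20}, and the problem is solvable, the solution living in $H^1(\varpi)$ by construction. The main obstacle I anticipate is not the abstract Fredholm step but the careful justification of the integrations by parts near the singular points $P^j\in\partial\varpi$ — both in establishing $\Delta W_k\in L^2(\varpi)$ and in reading \eqref{2.19cd} as the exact orthogonality condition — which calls for excising the hemispheres $B_r(P^j)\cap\varpi$, exploiting $\partial_n U_k^0=0$ on $\sigma$ and the vanishing of $\chi_j$ on $\gamma$, and passing to the limit $r\to0$ as in the derivation of \eqref{2.20}.
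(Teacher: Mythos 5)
Your proof is correct and, in outline, is the paper's own argument: both pass to the variational form, recast it as the operator equation $(\mathcal{B}-M_k^0)U_k'=\mathrm{RHS}$ for the compact self-adjoint operator $\mathcal{B}$ of \eqref{1.25}--\eqref{1.26}, invoke the Fredholm alternative, and check that the one scalar solvability condition is precisely the identity \eqref{2.19cd} that defined $\Lambda_k'(\eta)$ in \eqref{2.20}. You deviate in one technical respect, and it is to your advantage. The paper integrates by parts once more and writes the right-hand side explicitly as $-M_k^0(W_k-\mathcal{B}W_k-\Lambda_k'\mathcal{B}U_k^0)$, which tacitly treats $W_k$ as an element of $H^1(\varpi)$; but near $P^j$ one has $W_k\sim a_{kj}\capas_3(\theta)\,|x-P^j|^{-1}$, so $|\nabla W_k|\sim|x-P^j|^{-2}$ fails to be square integrable in three dimensions, and the paper's parenthetical claim that $W_k$ and $\nabla W_k$ are uniformly bounded in $\varpi$ is accurate only away from the points $P^j$ (the specific pairing $(\nabla W_k,\nabla U_k^0)_\varpi$ used in the orthogonality check does converge absolutely, by Lemma \ref{rem1.6}, so the paper's conclusion stands). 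Your version — keeping the right-hand side as the functional $\ell(V)=(\Delta W_k,V)_\varpi+\Lambda_k'(U_k^0,V)_\gamma$, observing that the cut-offs make $\Delta W_k$ bounded and supported in the annuli where $\chi_j$ is not locally constant, and passing to the Riesz representative — makes $\ell$ manifestly bounded on all of $H^1(\varpi)$ and quietly repairs this point. Two small remarks: your variational identity carries the opposite sign to the paper's \eqref{2.20ac} on the right-hand side; a direct application of Green's formula supports your sign, and since the entire functional flips, the orthogonality condition and hence \eqref{2.20} are unaffected. Also, your appeal to simplicity of $\Lambda_k^0$ to identify $\ker(\mathcal{B}-M_k^0)$ with the span of $U_k^0$ is legitimate in context, since Lemma \ref{lem2.1} sits in Section \ref{sec:3.1} where simplicity is the standing assumption (the multiple eigenvalue case is treated separately in Section \ref{sec:3.2}).
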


\begin{proof}
The variational formulation of the problem \eqref{U'-eq}--\eqref{U'-b-cond-2} reads as
\begin{equation}
\label{2.20ac}
(\nabla U_k' , \nabla V)_\varpi - \Lambda_k^0(U_k', V)_\gamma
= (\nabla W_k , \nabla V)_\varpi - \Lambda_k' (U_k^0, V)_\gamma .
\end{equation}
We remark that the function $1/|x-P^j|$ is harmonic in $\varpi$, and since
$\chi_j$ equals  constant one in a neighbourhood of $P^j$, the function $\Delta W_k$ vanishes there, hence, $W_k$  and $\nabla W_k$ are  smooth as well as uniformly bounded everywhere 
in $\varpi$. Moreover, $U_k^0 \in L^2 (\gamma)$.

Using the definition  of the operator $\mathcal{B}: H^1(\varpi) \to 
H^1(\varpi) $ (cf. \eqref{1.25}, \eqref{1.26} and the remarks above \eqref{max-min-1}) 
we can rewrite \eqref{2.20ac} as follows: 
\begin{equation}
(U_k',V)_{0}-(\Lambda^0_k+1)(\mathcal{B}U_k',V)_0
=(W_k,V)_0-\Lambda_k'(\mathcal{B}U_k^0,V)_0
-(\mathcal{B}W_k,V)_0
\end{equation} 
which means that $U'_k$ must be a solution of the equation
\begin{equation}
(\mathcal{B}-M_k^0)U'_k=-M_k^0(W_k-\mathcal{B}W_k-\Lambda'_k\mathcal{B}U^0_k).
\label{2.20xx}
\end{equation}
Notice that $U_k^0$ is the solution of the homogeneous problem \eqref{2.20xx}, so,
by the Fredholm alternative, \eqref{2.20xx} is solvable, if and only if the right hand 
side of it is orthogonal  to the function $U_k^0$. 
This condition is 
satisfied by choosing $\Lambda_k'(\eta)$ as above, since
$$
(W_k-\mathcal{B}W_k-\Lambda'_k\mathcal{B}U^0_k,U^0_k)_0=
(\nabla W_k,\nabla U_k^0)_{\varpi}-\Lambda'_k\|U^0_k;L^2(\gamma)\|^2=0\, ,
$$ 
by \eqref{2.19cd} and $(\nabla W_k,\nabla U_k^0)_{\varpi}
= - (\Delta  W_k,  U_k^0)_{\varpi}$. This last identity follows from the first
Green formula, because the normal derivative of $W_k$ vanishes on $\partial \varpi$
due to the properties of the function $\chi_j$, see below \eqref{1.11ab}.  \ \ $\Box$
\end{proof}

\subsection{The case of a multiple eigenvalue}
\label{sec:3.2}
In this section we complete the asymptotic analysis by studying the behaviour of eigenvalues $\Lambda_k^\epsilon(\eta)$ in the case some $\Lambda_k^0$ has  multiplicity $m $ greater than one: we have
$$
\Lambda_{k-1}^0 < \Lambda_{k}^0 = \ldots  =  \Lambda_{k + m-1}^0 < \Lambda_{k+m}^0 .
$$ 
The ansatz \eqref{ansatz1} is used again. Furthermore, as in  \eqref{1.45} we denote by $(U_{k+j}^0)_{0\leq j\leq m-1} \subset L^2(\gamma)$
an orthonormal system of eigenfunctions associated with the eigenvalue $\Lambda_k^0$. 
Any eigenfunction $U^0$ corresponding to $\Lambda^0_k$ can be presented
as a linear combination
\begin{equation*}
U^0(x)=\sum_{j=0}^{m-1}\alpha_j U_{k+j}^0(x).
\end{equation*}
Analogously to \eqref{ansatz2} we introduce the asymptotic ansatz
\begin{eqnarray}
\label{ansatz20}
U^{\epsilon}(x;\eta)&=&U^0(x)\\
&+& \chi_0(x)w_{k0}(\epsilon^{-1}(x-{P^0}))+ \chi_1(x)w_{k1}(\epsilon^{-1}(x - P^1))\nonumber\\
&+& \epsilon U'(x;\eta)+\widetilde{U}^\epsilon(x;\eta).\nonumber
\end{eqnarray}
Using the same argumentation as in the previous section we construct the boundary layers $w_{kj}$, $j=0,1$, which satisfy the conditions
\begin{equation*}
\label{layer00}
w_{kj}(\xi^j)=a_{kj}\frac{\capas_3 \theta}{|\xi^j|}+O(|\xi^j|^{-2}) ; 
\end{equation*}
here the coefficients \(a_{kj}\) come   from the equations \eqref{ab}, where $U_k^0$ is replaced by $U^0$.
The main asymptotic term $U'$ is also treated in the same way as in Section \ref{sec:3.1}.  
To use the Fredholm alternative for finding \(\Lambda_{k+j}'(\eta)\), $j=0, \ldots , m-1$, we write 
\begin{equation*}
\Lambda_{k+j}'(\eta)\alpha_j = \Lambda_{k+j}'(\eta)(U^0,U_{k+j}^0)_{\gamma} ,
\end{equation*}
and making use of the Green formula as above we get
\begin{equation*}
\Lambda_{k+j}'(\eta)\alpha_j =\sum_{l=0}^{m-1} \beta_{lj}\alpha_j,
\end{equation*}
where 
\begin{equation*}
\beta_{lj}=\pi\capas_3(\theta)(U_{k+l}^0({P^0})-e^{-i\eta}U_{k+l}^0({P^1}))\overline{(U_{k+j}^0({P^0})-e^{-i\eta}
U_{k+j}^0({P^1}))}.
\end{equation*}
Hence, $\Lambda_{k+j}'(\eta)$ is an eigenvalue of the matrix $B=(\beta_{lj})_{l,j=0}^{m-1}$. This matrix has rank one, because it can be represented in the form $B=\overline{v} v^\top $, where $v$ is a vector with components $v_{j+1}=U_{k+j}^0({P^0})-e^{-i\eta}U_{k+j}^0({P^1})$,
$j= 0, \ldots , m-1$. This means that 
\begin{eqnarray}
\label{L-mult-1}
\Lambda'_k(\eta)&=&\pi\capas_3(\theta)\sum_{l=0}^{m-1} |U_{k+l}^0({P^0})-e^{-i\eta}U_{k+l}^0({P^1})|^2,\\
\label{L-mult-2}
\Lambda'_{k+j}(\eta)&=&0, \quad 1\leq j\leq m -1\,.
\end{eqnarray}

\section{Existence and position of spectral gaps}
\label{sec:4}
\subsection{Existence of gaps}
\label{sec:4.1}

The first estimate on the  eigenvalues of the problem \eqref{spectralvar} can
now be stated as follows.

\begin{theorem}
\label{th3.1}
For any \(k\in\mathbb{N}\) there are numbers \(\epsilon_k>0\) and  \(C_k>0\) such that for every
\(\epsilon\in(0,\epsilon_k) \) and any dual variable \(\eta\in [0,2\pi)\), the eigenvalues of the
problem \eqref{spectralvar} and the eigenvalues of the limit problem \eqref{var_limit} are related
as follows:
\begin{equation}\label{eigenbound}
\Lambda^0_k\leq\Lambda^\epsilon_k(\eta)\leq\Lambda^0_k+C_k\epsilon.
\end{equation}
\end{theorem}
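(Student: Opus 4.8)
The plan is to derive both inequalities in \eqref{eigenbound} from the variational characterisation of the eigenvalues, exploiting the fact that $\mathcal{H}^\epsilon = H^1_{\epsilon,\eta}(\varpi)$ is, as a set of functions, a subspace of $H^1(\varpi)$. It is convenient to use the dual (Courant--Fischer) form of \eqref{max-min-2} and \eqref{max-min-1}, namely
\begin{equation*}
\Lambda_k = \min_{L} \, \max_{v \in L \setminus \{0\}} \frac{\Vert \nabla v; L^2(\varpi)\Vert^2}{\Vert v; L^2(\gamma)\Vert^2},
\end{equation*}
where $L$ runs over all $k$-dimensional subspaces of the underlying Hilbert space ($\mathcal{H}^\epsilon$ for $\Lambda_k^\epsilon(\eta)$, and $H^1(\varpi)$ for $\Lambda_k^0$). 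The lower bound $\Lambda_k^0 \le \Lambda_k^\epsilon(\eta)$ is then immediate: since every $k$-dimensional subspace $L \subset \mathcal{H}^\epsilon$ is also a $k$-dimensional subspace of $H^1(\varpi)$, the minimum defining $\Lambda_k^0$ is taken over a larger family and can only be smaller.

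For the upper bound I would construct an explicit $k$-dimensional trial space $E \subset \mathcal{H}^\epsilon$ by perturbing the first $k$ limit eigenfunctions $U_1^0, \dots, U_k^0$ so that they satisfy the quasi-periodicity condition \eqref{qua-1} \emph{exactly}. Using the scaled cut-off functions $X_j^\epsilon$ from \eqref{defex}, which equal $1$ on the aperture $\theta_j^\epsilon$ and vanish on $\gamma$, I set for each $l$
\begin{equation*}
\widehat U_l^\epsilon(x) = U_l^0(x) + X_0^\epsilon(x)\big(A_l - U_l^0(x)\big) + X_1^\epsilon(x)\big(e^{i\eta}A_l - U_l^0(x)\big),
\end{equation*}
with $A_l$ a constant (for instance $A_l = U_l^0(P^0)$). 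Because the supports of $X_0^\epsilon$ and $X_1^\epsilon$ are disjoint for small $\epsilon$, the function $\widehat U_l^\epsilon$ equals the constant $A_l$ on $\theta_0^\epsilon$ and $e^{i\eta}A_l$ on $\theta_1^\epsilon$, hence $\widehat U_l^\epsilon(0,x') = e^{-i\eta}\widehat U_l^\epsilon(1,x')$ and $\widehat U_l^\epsilon \in \mathcal{H}^\epsilon$. Moreover $\widehat U_l^\epsilon = U_l^0$ on $\gamma$, so the $\widehat U_l^\epsilon$ are linearly independent and span a genuinely $k$-dimensional space $E$.

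The core of the argument is the energy estimate for the correction $R_l := \widehat U_l^\epsilon - U_l^0$. Since $X_j^\epsilon$ is supported in a ball of radius $O(\epsilon)$ about $P^j$ (of volume $O(\epsilon^3)$) with $|\nabla X_j^\epsilon| = O(\epsilon^{-1})$, and since $U_l^0$ together with $\nabla U_l^0$ is bounded there by Lemma \ref{rem1.6}, one obtains $\Vert \nabla R_l; L^2(\varpi)\Vert^2 = O(\epsilon)$. Writing a general $v = \sum_l c_l \widehat U_l^\epsilon = v_0 + R$ with $v_0 = \sum_l c_l U_l^0$ and $R = \sum_l c_l R_l$, the $L^2(\gamma)$-orthonormality of the $U_l^0$ gives $\Vert v; L^2(\gamma)\Vert^2 = \sum_l |c_l|^2$, and the eigenrelation $(\nabla U_l^0,\nabla U_m^0)_\varpi = \Lambda_l^0 (U_l^0,U_m^0)_\gamma$ yields $\Vert \nabla v_0; L^2(\varpi)\Vert^2 = \sum_l |c_l|^2 \Lambda_l^0 \le \Lambda_k^0 \Vert v; L^2(\gamma)\Vert^2$. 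It then remains to control the cross term $(\nabla v_0, \nabla R)_\varpi$ and $\Vert \nabla R\Vert^2$; the latter is $O(\epsilon)\Vert v; L^2(\gamma)\Vert^2$ by the above.

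The main obstacle is to see that the cross term is of genuinely higher order. A crude Cauchy--Schwarz using the global norm $\Vert \nabla v_0\Vert$ would only yield $O(\sqrt\epsilon)$, which is not sufficient. The remedy is to localise: since $R$ is supported in the union of the two $O(\epsilon)$-balls, one estimates $\Vert\nabla v_0\Vert_{L^2(\mathrm{supp}\,R)} = O(\epsilon^{3/2})$ (volume $O(\epsilon^3)$ times a bounded gradient) and pairs it with $\Vert\nabla R\Vert = O(\sqrt\epsilon)$, so that the cross term is $O(\epsilon^2)$. Collecting the estimates gives $\Vert\nabla v\Vert^2 \le (\Lambda_k^0 + C_k\epsilon)\Vert v; L^2(\gamma)\Vert^2$ for all $v \in E$, uniformly in $\eta$ because the constants in Lemma \ref{rem1.6} are $\eta$-independent and $|e^{i\eta}| = 1$. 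The min-max principle then yields $\Lambda_k^\epsilon(\eta) \le \max_{v\in E} \Vert\nabla v\Vert^2/\Vert v\Vert_\gamma^2 \le \Lambda_k^0 + C_k\epsilon$ for all $\epsilon$ below a suitable $\epsilon_k$, which completes the proof.
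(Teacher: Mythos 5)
Your proof is correct and takes essentially the same route as the paper: the lower bound by inclusion of the quasi-periodic trial spaces in $H^1(\varpi)$ under the Courant--Fischer characterisation, and the upper bound via a $k$-dimensional trial space of limit eigenfunctions modified by the scaled cut-offs $X^\epsilon_j$ near the apertures, with identical order bookkeeping ($O(\epsilon)$ correction energy dominating, $O(\epsilon^2)$ cross term obtained by localising to the $O(\epsilon^3)$-volume supports, exactly as in the paper's estimate \eqref{3.39}). The only deviation is cosmetic: the paper uses your construction with $A_l=0$, i.e.\ trial functions $(1-X^\epsilon_0-X^\epsilon_1)U^0_l$ that vanish on the apertures instead of matching constants there.
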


As mentioned  in the introduction (see the explanations around \eqref{eq1.1} and \eqref{sets}), this result implies the existence of any prescribed number of  gaps in the essential spectrum $\sigma$, since \eqref{eigenbound} also establishes an estimate for the endpoints of the intervals $\Upsilon_k^\epsilon$. To prove that result one needs to take enough many distinct eigenvalues $\Lambda_k^0$ and a small enough $\epsilon$.

\begin{corollary}
\label{cor3.2}
Given any number $N \in \mathbb{N}$, the essential spectrum $\sigma$ of the problem \eqref{problem}--\eqref{b-cond} on $\Pi^\epsilon$ has at least $N$ gaps, if $\epsilon $ is small enough.
\end{corollary}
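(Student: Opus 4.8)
The plan is to deduce Corollary \ref{cor3.2} directly from Theorem \ref{th3.1} together with the band description \eqref{eq1.1}. The key mechanism is that Theorem \ref{th3.1} confines each band $\Upsilon_k^\epsilon$ to a small interval anchored at the limit eigenvalue: since $\Lambda_k^0 \leq \Lambda_k^\epsilon(\eta) \leq \Lambda_k^0 + C_k\epsilon$ for all $\eta$, the definition \eqref{sets} gives
\begin{equation*}
\Upsilon_k^\epsilon \subset [\Lambda_k^0, \Lambda_k^0 + C_k\epsilon].
\end{equation*}
Thus each band is trapped in a window of width at most $C_k\epsilon$ lying just above $\Lambda_k^0$. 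A gap between consecutive bands will open whenever the window for $\Upsilon_k^\epsilon$ lies strictly below the window for $\Upsilon_{k+1}^\epsilon$, which happens once the spacing $\Lambda_{k+1}^0 - \Lambda_k^0$ exceeds the combined uncertainty coming from the $\epsilon$-terms.

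First I would fix the target $N$ and select indices at which the limit spectrum actually increases. Because the sequence $(\Lambda_k^0)_{k=1}^\infty$ is unbounded and nondecreasing, I can choose indices $k_1 < k_2 < \cdots < k_N$ such that $\Lambda_{k_i}^0 < \Lambda_{k_i+1}^0$ for each $i$; that is, I pick $N$ distinct values attained by the limit eigenvalues, using the index just below each jump. Set $\delta = \min_{1 \leq i \leq N} (\Lambda_{k_i+1}^0 - \Lambda_{k_i}^0) > 0$, the smallest of the relevant spectral gaps in the limit problem.

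Next I would make $\epsilon$ small enough to guarantee separation at all $N$ chosen indices simultaneously. Let $C = \max_{1 \leq i \leq N} C_{k_i}$ and $\epsilon_* = \min_{1 \leq i \leq N} \epsilon_{k_i}$, where $C_{k_i}$ and $\epsilon_{k_i}$ are the constants furnished by Theorem \ref{th3.1}. Choosing any $\epsilon$ with $0 < \epsilon < \min\{\epsilon_*, \delta/(2C)\}$, I obtain for each $i$
\begin{equation*}
\sup \Upsilon_{k_i}^\epsilon \leq \Lambda_{k_i}^0 + C_{k_i}\epsilon < \Lambda_{k_i}^0 + \delta \leq \Lambda_{k_i+1}^0 \leq \inf \Upsilon_{k_i+1}^\epsilon,
\end{equation*}
so the closed interval $\big(\Lambda_{k_i}^0 + C_{k_i}\epsilon, \ \Lambda_{k_i+1}^0\big)$ is a nonempty open interval disjoint from every band $\Upsilon_k^\epsilon$. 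To see it meets no band at all, note that any band $\Upsilon_k^\epsilon$ with $k \leq k_i$ lies below $\Lambda_{k_i}^0 + C_{k_i}\epsilon$ (using monotonicity of $\Lambda_k^0$ and the upper bound), while any band with $k \geq k_i+1$ lies above $\Lambda_{k_i+1}^0$; hence by \eqref{eq1.1} the open interval lies in the complement of $\sigma$ and is a genuine spectral gap. Since the $N$ intervals are pairwise disjoint (the $k_i$ are distinct and strictly increasing), this produces at least $N$ gaps.

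The only subtle point, and the one I would state carefully, is the ordering argument ensuring that no stray band sneaks into one of the candidate gaps: this is exactly where the uniformity of the bound in $\eta$ in Theorem \ref{th3.1} is used, since it controls the entire band $\Upsilon_k^\epsilon$ and not merely individual values $\Lambda_k^\epsilon(\eta)$. There is no deep obstacle here; the proof is a short bookkeeping argument once the uniform two-sided estimate is in hand, and the main conceptual work has already been done in Theorem \ref{th3.1}.
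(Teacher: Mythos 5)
Your overall strategy is exactly the one the paper intends (the paper proves the corollary only by the one-sentence remark preceding it: take sufficiently many distinct limit eigenvalues and $\epsilon$ small enough), and the skeleton --- trap each band via $\Upsilon_k^\epsilon \subset [\Lambda_k^0, \Lambda_k^0 + C_k\epsilon]$, choose indices at jumps of $(\Lambda_k^0)_{k\ge 1}$, shrink $\epsilon$ --- is correct. However, the step you yourself flag as the subtle one is not justified as written. You claim that every band $\Upsilon_k^\epsilon$ with $k \le k_i$ lies below $\Lambda_{k_i}^0 + C_{k_i}\epsilon$ ``using monotonicity of $\Lambda_k^0$ and the upper bound''; those two facts only give $\sup \Upsilon_k^\epsilon \le \Lambda_k^0 + C_k\epsilon \le \Lambda_{k_i}^0 + C_k\epsilon$, with the constant $C_k$ of the \emph{intermediate} index, which your choices $C = \max_i C_{k_i}$ and $\epsilon_* = \min_i \epsilon_{k_i}$ do not control. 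If, say, $\Lambda_{k_i-1}^0 = \Lambda_{k_i}^0$ (a multiple limit eigenvalue) and $C_{k_i-1} > C_{k_i}$, nothing in your argument prevents band $k_i-1$ from protruding into the candidate gap $(\Lambda_{k_i}^0 + C_{k_i}\epsilon,\, \Lambda_{k_i+1}^0)$. Similarly, Theorem \ref{th3.1} as stated gives the lower bound $\Lambda_k^\epsilon(\eta) \ge \Lambda_k^0$ only for $\epsilon \in (0,\epsilon_k)$, and for the infinitely many indices $k \ge k_i+1$ your threshold $\epsilon_*$ does not guarantee applicability (it does not even include $\epsilon_{k_i+1}$).

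Both defects are repaired by one fact you never invoke: the monotonicity in $k$ of the eigenvalues at fixed $\eta$, i.e.\ the ordering \eqref{seq}. For $k \le k_i$ and every $\eta$ one has $\Lambda_k^\epsilon(\eta) \le \Lambda_{k_i}^\epsilon(\eta) \le \Lambda_{k_i}^0 + C_{k_i}\epsilon$, and for $k \ge k_i+1$ one has $\Lambda_k^\epsilon(\eta) \ge \Lambda_{k_i+1}^\epsilon(\eta) \ge \Lambda_{k_i+1}^0$; thus only the constants and thresholds at the indices $k_i$ and $k_i+1$ are ever needed, and it suffices to take $\epsilon < \min_i \min\{\epsilon_{k_i}, \epsilon_{k_i+1}, \delta/(2C)\}$. (Alternatively, enlarge your maximum and minimum to run over \emph{all} $k \le k_N+1$ --- still a finite collection --- and observe that the lower bound in \eqref{eigenbound} in fact holds for all $\epsilon$ and all $k$, since the paper's proof of it is pure subspace inclusion.) With this one-line patch your bookkeeping closes, and the resulting proof coincides with the argument the paper sketches.
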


{\it Proof of Theorem \ref{th3.1}.} We apply the  max-min-principle described in Section \ref{sec:1.2}
and  first prove the estimate
\begin{equation}
\label{max-min}
\Lambda_k^\epsilon (\eta)  \geq \Lambda_k^0 .
\end{equation}
Indeed, we recall that in the equations \eqref{max-min-1}
and \eqref{max-min-2} both 
$F_k \subset H^1(\varpi)$ and $E_k \subset {\mathcal{H}}^\epsilon = H^1_{\epsilon,\eta}(\varpi)$ are 
arbitrary  subspaces of co-dimension $k-1$.
Since $H^1_{\epsilon,\eta}(\varpi)\subset H^1(\varpi)$, each $E_k$ is contained in some  $F_k$, and thus the infimum in \eqref{max-min-1} is smaller
than that in \eqref{max-min-2}.

So we turn to the upper estimate in \eqref{eigenbound} and fix a $k \in \mathbb{N} $.
Let the eigenfunctions $U^0_j$ be as in \eqref{1.45} 
and let $H_k\subset{\mathcal{H}}^{\epsilon}(\varpi)$ be a subspace
spanned by the functions $Y^\epsilon U_j^0$, where $j=1, \ldots,k$ and  
\begin{eqnarray}
Y^\epsilon = 1 - X^\epsilon_0 - X^\epsilon_1 \in C^\infty(\varpi) \label{3.111}
\end{eqnarray}
and $X^\epsilon_j$ are as in \eqref{defex}.
We remark that the functions $Y^\epsilon U_j^0$ satisfy the  quasi-periodicity condition \eqref{qua-1} in the definition of the space ${\mathcal{H}}^\epsilon$,
since $Y^\epsilon$ vanishes in a neighbourhood of the apertures, see the remarks around \eqref{defex}. Moreover, the sequence  
$\big( Y^\epsilon U_1^0$, $Y^\epsilon U_2^0$, \ldots, $Y^\epsilon U_k^0 \big)$ is  still  linearly independent, due to the $L^2(\gamma)$-orthogonality in \eqref{1.45}
and the fact that $Y^\epsilon$ equals 1 in the set $\gamma$. Hence, the dimension
of $H_k$ is $k$.

If $E_k$ is an arbitrary subspace of  ${\mathcal{H}}^\epsilon$ of co-dimension $k-1$
(cf.\,\eqref{max-min-2}), the intersection $E_k \cap H_k$ contains a non-trivial linear combination
\begin{equation}
U(x)=Y^\epsilon(x)\sum_{j=1}^k a_j U^0_j(x), \quad \sum_{j=1}^k |a_j|^2=1 .
\label{3.35}
\end{equation}
By the remarks just above we have
$
\Vert U ; L^2(\gamma)\Vert = 1 .
$ 
Hence, from \eqref{max-min-2} and \eqref{1.45} we infer that
\begin{eqnarray}
\Lambda^\epsilon_k(\eta)&\leq& \frac{\left\|\nabla U;L^2(\varpi)\right\|^2}{\|U;L^2(\gamma)\|^2} 
=\|\nabla U;L^2(\varpi)\|^2 \nonumber \\ 
&=& \Big\|\nabla \Big( (Y^\epsilon - 1)\sum_{j=1}^ka_j  U_j^0 \Big)
+ \nabla \Big( \sum_{j=1}^k a_j U_j^0\Big)  ;L^2(\varpi)\Big\|^2
\nonumber \\ 
& = & \Big\| \nabla \Big( \sum_{j=1}^k a_j U_j^0\Big) ; L^2(\varpi)\Big\|^2+
2\Big( \nabla \Big( (-X^\epsilon_0 -X^\epsilon_1)\sum_{j=1}^ka_j U_j^0 \Big) ,
\nabla \Big( \sum_{j=1}^k a_j U_j^0\Big) \Big)_\varpi \nonumber \\
&+&
\Big\|\nabla \Big( (X^\epsilon_0 + X^\epsilon_1)\sum_{j=1}^ka_j  U_j^0 \Big) ;L^2(\varpi)\Big\|^2
\label{3.38}
\end{eqnarray}
To evaluate the first term on the right hand side notice that the functions $U_j^0$ satisfy
\eqref{var_limit} so that the $L^2(\gamma)$-orthogonality of \eqref{1.45} implies
\begin{eqnarray}
\label{3.40}
& & \Big\|\nabla \Big( \sum_{j=1}^k a_j  U_j^0 \Big); L^2(\varpi)\Big\|^2
= \sum_{j,l=1}^k a_j a_l (\nabla U_j^0 ,\nabla U_l^0)_\varpi \nonumber \\
& = & \sum_{j,l=1}^k a_j a_l \Lambda_j^0 ( U_j^0 , U_l^0)_\gamma
= \sum_{j=1}^k a_j^2 \Lambda_j^0  \leq \Lambda_k^0  , 
\end{eqnarray}
where the last inequality follows from \eqref{3.35} and the fact that
the eigenvalues $\Lambda_j^0$ are indexed in increasing order.
Furthermore, we use Lemma \ref{rem1.6} as well as the facts that the supports of $X^\epsilon_l$, $l=0,1$,
have measure of order $\epsilon^3$,  $|\nabla X^\epsilon_l|$ are of order $\epsilon^{-1}$, and $| a_j| \leq 1$ to estimate
\begin{eqnarray}
& & \Big\vert \Big( \nabla \Big( (X^\epsilon_0 + X^\epsilon_1)\sum_{j=1}^ka_j U_j^0 \Big) ,
\nabla \Big( \sum_{j=1}^k a_j U_j^0\Big) \Big)_\varpi
\Big\vert  \nonumber \\
& \leq & k^2  
\Big( \sup\limits_{ x \in  S }  \big(1, |U_j^0(x)|, |\nabla U_j^0(x) |\big)\Big)^2 \ \sup\limits_{ x \in  S } \big(1, | \nabla X^\epsilon_l(x)| \big) 
 \int_{ S }  dx 
\leq C_k \epsilon^{2} ,  \nonumber \\
& &  \Big\| \nabla \Big( (X^\epsilon_0+X^\epsilon_1)\sum_{j=1}^ka_j  U_j^0 \Big)
;L^2(\varpi)\Big\|^2 
\nonumber \\
\label{3.39}
& \leq & k^2  \Big( 
\sup\limits_{ x \in  S }  \big(1, |U_j^0(x)|, |\nabla U_j^0(x) |\big) \Big)^2
\Big(  \sup\limits_{ x \in S } \big(1, | \nabla X^\epsilon_l(x)| \big) 
\Big)^2  \int_{ S }  dx 
\leq C_k \epsilon ,
\end{eqnarray}
where $S = {\rm supp} (X^\epsilon_0+X^\epsilon_1)$. 
Combining this with \eqref{3.40} and \eqref{3.38} yields the result. 
\ \ $\Box$

\subsection{Asymptotic position of spectral bands}
\label{sec:4.2}

In this section we shall prove the validity of the asymptotic ansatz \eqref{ansatz1}, see
Theorem \ref{th4.5}. This yields our main result concerning the asymptotic position
of the spectral bands, Theorem \ref{cor4.6}.

We start the proof by recalling   a classical lemma on near eigenvalues and eigenvectors (see \cite{ViLu} and also, e.g., \cite{BiSo}).

\begin{lemma}\label{nearEig}
Let ${\mathcal{T}}$ be a selfadjoint, positive, and compact operator in a Hilbert
space ${\mathcal{H}}$. 
If a number $\mu>0$ and an element ${\mathcal{V}}\in {\mathcal{H}}$ satisfy
$\|{\mathcal{V}} ; {\mathcal{H}}\|=1$ and $\|{\mathcal{T}\mathcal{V}}-\mu {\mathcal{V}}; {\mathcal{H}}
\|=\tau\in(0,\mu)$, then the segment $[\mu -\tau,\mu+\tau]$ contains at least one
eigenvalue of ${\mathcal{T}}$. 
\end{lemma}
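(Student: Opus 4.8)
The plan is to prove Lemma~\ref{nearEig} by contradiction, exploiting the spectral theorem for the compact selfadjoint operator ${\mathcal{T}}$. Suppose, contrary to the claim, that the interval $[\mu-\tau,\mu+\tau]$ contains no eigenvalue of ${\mathcal{T}}$. Since ${\mathcal{T}}$ is selfadjoint, positive and compact, the spectral theorem provides an orthonormal basis $(\phi_n)_{n=1}^\infty$ of ${\mathcal{H}}$ (the relevant part of it) consisting of eigenvectors, ${\mathcal{T}}\phi_n = t_n \phi_n$, with eigenvalues $t_n \geq 0$ accumulating only at $0$. I would first expand the given element as ${\mathcal{V}} = \sum_n c_n \phi_n$ with $\sum_n |c_n|^2 = \|{\mathcal{V}};{\mathcal{H}}\|^2 = 1$, so that ${\mathcal{T}}{\mathcal{V}} - \mu{\mathcal{V}} = \sum_n (t_n - \mu) c_n \phi_n$.

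The key step is to compute the defect norm in terms of the spectral coefficients and to bound it from below using the assumed spectral gap. By Parseval's identity,
\begin{equation*}
\tau^2 = \|{\mathcal{T}}{\mathcal{V}} - \mu{\mathcal{V}};{\mathcal{H}}\|^2 = \sum_n |t_n - \mu|^2\, |c_n|^2 .
\end{equation*}
Under the contradiction hypothesis, every eigenvalue $t_n$ lies outside $(\mu-\tau,\mu+\tau)$, which forces $|t_n - \mu| \geq \tau$ for every $n$ with $c_n \neq 0$. Substituting this lower bound gives
\begin{equation*}
\tau^2 = \sum_n |t_n - \mu|^2 |c_n|^2 \geq \tau^2 \sum_n |c_n|^2 = \tau^2 ,
\end{equation*}
so the inequality is in fact an equality throughout. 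Equality can only hold if $|t_n - \mu| = \tau$ for every index $n$ with $c_n \neq 0$; in other words, the entire spectral mass of ${\mathcal{V}}$ sits on eigenvalues at the exact endpoints $\mu \pm \tau$ of the interval.

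The final step is to rule out this degenerate boundary case, which is where I expect the only real subtlety to lie. The hypothesis $\tau \in (0,\mu)$ is designed precisely for this: it guarantees $\tau > 0$, so the defect is genuinely nonzero and ${\mathcal{V}}$ is not an exact eigenvector, and it guarantees $\mu - \tau > 0$, so both candidate endpoint eigenvalues are strictly positive and hence admissible for a positive operator. I would then observe that if some $t_n = \mu - \tau$ or $t_n = \mu + \tau$ occurred with $c_n \neq 0$, that $t_n$ would itself be an eigenvalue lying in the closed interval $[\mu-\tau,\mu+\tau]$, directly contradicting our standing assumption that the interval contains no eigenvalue. (Here I am using that the interval in the statement is closed, so its endpoints count.) Thus no admissible configuration of coefficients exists, the contradiction is complete, and the closed segment $[\mu-\tau,\mu+\tau]$ must contain at least one eigenvalue of ${\mathcal{T}}$. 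The main obstacle is purely bookkeeping around the endpoints: one must be careful that the strict bound $|t_n-\mu|\geq\tau$ together with the closedness of the interval leaves no loophole, and the role of the assumption $\tau<\mu$ is mainly to ensure nondegeneracy of the hypotheses rather than to drive the estimate itself.
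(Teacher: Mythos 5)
Your proof is correct, but note first that the paper itself offers no proof of this lemma: it is quoted as a classical result on ``near eigenvalues and eigenvectors'' with references to Vishik--Lyusternik \cite{ViLu} and Birman--Solomyak \cite{BiSo}. So your argument should be compared with the standard proofs in those sources rather than with anything in the text. Your spectral-expansion route is the standard one for compact operators and is sound: the Hilbert--Schmidt theorem does give an orthonormal basis of eigenvectors (vectors of $\ker{\mathcal T}$, if any, being eigenvectors for the eigenvalue $0$), Parseval converts the defect norm into $\tau^2=\sum_n|t_n-\mu|^2|c_n|^2$, and exclusion of all eigenvalues from the segment yields the contradiction. Two small remarks. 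First, your endpoint bookkeeping is avoidable: since you assume the \emph{closed} segment $[\mu-\tau,\mu+\tau]$ is free of eigenvalues, you already have the strict bound $|t_n-\mu|>\tau$ for every $n$; as $\sum_n|c_n|^2=1$ forces some $c_n\neq0$, the sum is strictly larger than $\tau^2$, a one-line contradiction — the detour through ``equality forces all spectral mass onto the endpoints'' is then unnecessary (though not wrong). Second, your closing claim that $\tau<\mu$ is what makes the argument nondegenerate slightly misplaces its role: your contradiction argument never actually uses $\tau<\mu$, and indeed for $\tau\geq\mu$ the statement is trivially true for a positive compact operator, since then $0\in[\mu-\tau,\mu+\tau]$ and either $0$ is an eigenvalue (finite rank) or the nonzero eigenvalues accumulate at $0$ and eventually lie in the segment. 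Where $\tau<\mu$ genuinely earns its keep is in the alternative resolvent proof, which is the form usually found in the cited literature: for selfadjoint ${\mathcal T}$ one has $\|({\mathcal T}-\mu)^{-1}\|=\operatorname{dist}\big(\mu,\sigma({\mathcal T})\big)^{-1}$, so $1=\|({\mathcal T}-\mu)^{-1}({\mathcal T}{\mathcal V}-\mu{\mathcal V})\|\leq\tau\,\operatorname{dist}\big(\mu,\sigma({\mathcal T})\big)^{-1}$ gives $\operatorname{dist}\big(\mu,\sigma({\mathcal T})\big)\leq\tau$; there the hypothesis $\mu-\tau>0$ is needed to rule out that the spectral point so located is merely $0\in\sigma({\mathcal T})$, the one point of the spectrum of a compact operator that need not be an eigenvalue. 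That resolvent argument is also marginally more general (it needs no eigenbasis, hence extends beyond compact operators to locating points of $\sigma({\mathcal T})$), while yours is more elementary and self-contained for the compact case at hand.
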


To apply Lemma \ref{nearEig} to the operator ${\mathcal{B}}^\epsilon(\eta)$
of \eqref{1.26}, we fix an arbitrary  $k$ and, keeping in mind the
formula \eqref{1.35},  define the approximate $k$:th eigenvalue and
eigenvector of ${\mathcal{B}}^\epsilon(\eta)$ by 
\begin{eqnarray}
\label{use lemma}
\mu_k &=&
(1+\Lambda_k^0+\epsilon\Lambda'_k(\eta))^{-1}, \\
{\mathcal{V}}_k(x)&=&\|{\mathcal{U}}_k; \cH^\epsilon \|^{-1} {\mathcal{U}}_k(x), \nonumber
\end{eqnarray}
where
\begin{eqnarray}
\label{calU}
&&{\mathcal{U}}_k(x)= (1-X^\epsilon_0(x)-X^\epsilon_1(x)) U_k^0(x)\\
&+&X^\epsilon_0(x) U_k^0({P^0}) + X^\epsilon_1(x) U_k^0({P^1}) \nonumber\\
&+&\chi_0 (x)  w_{k0}(\epsilon^{-1}(x-{P^0})) + 
\chi_1(x) w_{k1}(\epsilon^{-1}(x-{P^1})) \nonumber\\
&+& \epsilon (1-X^\epsilon_0(x)-X^\epsilon_1(x)) U'_k(x,\eta) , \nonumber
\end{eqnarray}
$U_k^0$ is as in \eqref{1.45}, $X^\epsilon_j(x)=\chi_\theta(\epsilon^{-1}(x-{P^j}))$
and $\chi_j$ are as in  \eqref{defex}.

We need a lower bound for the norm of $\cU_k$.

\begin{lemma}
For all $k$ there exists a constant $C_k>0$ such that
\begin{equation}                                                      
\label{lem1}
\Bigl|\|{\mathcal{U}}_k; \cH^{\epsilon}\|^2 -1 - \Lambda^0_k \Bigr|\leq C_k \epsilon^{1/2}\,.
\end{equation}
\end{lemma}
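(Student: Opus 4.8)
The plan is to use the norm splitting $\Vert\cU_k;\cH^\epsilon\Vert^2=\Vert\nabla\cU_k;L^2(\varpi)\Vert^2+\Vert\cU_k;L^2(\gamma)\Vert^2$ coming from \eqref{1.25}, and to treat the two summands separately. I would handle the surface term first. All the cut-offs entering \eqref{calU} vanish on the free surface $\gamma$: the functions $X^\epsilon_j$ vanish there by the remarks below \eqref{defex}, and $\chi_j$ vanishes on $\gamma$ by the specifications after \eqref{1.11ab}. Hence on $\gamma$ the complicated expression collapses to $\cU_k=U_k^0+\epsilon U_k'$, so that
\begin{equation*}
\Vert\cU_k;L^2(\gamma)\Vert^2=\Vert U_k^0;L^2(\gamma)\Vert^2+2\epsilon\,\mathrm{Re}\,(U_k^0,U_k')_\gamma+\epsilon^2\Vert U_k';L^2(\gamma)\Vert^2=1+O(\epsilon),
\end{equation*}
using the normalization $\Vert U_k^0;L^2(\gamma)\Vert=1$ from \eqref{1.45} and the fact that $U_k'\in H^1(\varpi)$ has a bounded trace on $\gamma$.

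For the gradient term I would write $\cU_k=U_k^0+D_1+D_2+D_3$, where $D_1=\sum_{j=0}^1 X^\epsilon_j\big(U_k^0(P^j)-U_k^0\big)$ collects the flattening near the apertures, $D_2=\sum_{j=0}^1\chi_j\,w_{kj}(\epsilon^{-1}(\cdot-P^j))$ is the boundary-layer part, and $D_3=\epsilon(1-X^\epsilon_0-X^\epsilon_1)U_k'$ is the first-order correction; one checks directly that these three pieces reproduce the lines of \eqref{calU}. The leading contribution is exactly $\Lambda_k^0$: testing \eqref{var_limit} with $V=U_k^0$ gives $\Vert\nabla U_k^0;L^2(\varpi)\Vert^2=\Lambda_k^0\Vert U_k^0;L^2(\gamma)\Vert^2=\Lambda_k^0$. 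The strategy is therefore to prove $\Vert\nabla(D_1+D_2+D_3);L^2(\varpi)\Vert=O(\epsilon^{1/2})$, and then expand $\Vert\nabla\cU_k\Vert^2=\Lambda_k^0+2\,\mathrm{Re}\,(\nabla U_k^0,\nabla(D_1+D_2+D_3))_\varpi+\Vert\nabla(D_1+D_2+D_3)\Vert^2$, the cross term being $O(\epsilon^{1/2})$ by Cauchy--Schwarz and the square term $O(\epsilon)$.

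The estimates on the three pieces proceed as follows. For $D_1$, the supports of $X^\epsilon_j$ have measure $O(\epsilon^3)$ and $|\nabla X^\epsilon_j|=O(\epsilon^{-1})$; by the mean value theorem and Lemma \ref{rem1.6} one has $|U_k^0(P^j)-U_k^0(x)|\le C_k\epsilon$ there, so $|\nabla D_1|\le C_k$ pointwise and $\Vert\nabla D_1;L^2(\varpi)\Vert=O(\epsilon^{3/2})$. For $D_3$, the term $\epsilon(1-X^\epsilon_0-X^\epsilon_1)\nabla U_k'$ contributes $O(\epsilon)$ since $\nabla U_k'\in L^2(\varpi)$, while the term $\epsilon(\nabla X^\epsilon_0+\nabla X^\epsilon_1)U_k'$ is $O(\epsilon^{3/2})$ by the same support/derivative count together with boundedness of $U_k'$ near $P^j$. \textbf{The decisive piece is $D_2$, and it is where the exponent $\tfrac12$ is produced.} Writing $\nabla D_2=\sum_j\big(\nabla\chi_j\,w_{kj}+\chi_j\,\nabla(w_{kj}(\epsilon^{-1}(\cdot-P^j)))\big)$, the first summand lives where $|x-P^j|=O(1)$, so $w_{kj}=O(\epsilon)$ by \eqref{layer0}--\eqref{new2} and its $L^2$ norm is $O(\epsilon)$; the second summand, after the stretching $\xi=\epsilon^{-1}(x-P^j)$ (so $dx=\epsilon^3\,d\xi$ and $\nabla_x=\epsilon^{-1}\nabla_\xi$), becomes
\begin{equation*}
\int_\varpi\chi_j^2\big|\nabla(w_{kj}(\epsilon^{-1}(x-P^j)))\big|^2\,dx=\epsilon\int\chi_j(P^j+\epsilon\xi)^2\,|\nabla_\xi w_{kj}(\xi)|^2\,d\xi\le C\epsilon,
\end{equation*}
by the finiteness of the Dirichlet integral \eqref{new1}. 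Thus $\Vert\nabla D_2;L^2(\varpi)\Vert=O(\epsilon^{1/2})$, and this is sharp because the $|\xi|^{-1}$ leading profile in \eqref{layer0} has gradient decaying like $|\xi|^{-2}$, whose scaled energy is genuinely of order $\epsilon$. Combining the gradient estimate $\Vert\nabla\cU_k\Vert^2=\Lambda_k^0+O(\epsilon^{1/2})$ with the surface estimate $\Vert\cU_k;L^2(\gamma)\Vert^2=1+O(\epsilon)$ yields \eqref{lem1}. The only genuine obstacle is bookkeeping the boundary-layer energy correctly; all other corrections are of higher order.
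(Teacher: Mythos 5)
Your proposal is correct and takes essentially the same route as the paper: you use the identical decomposition $\cU_k - U_k^0 = D_1 + D_2 + D_3$ (the paper's terms \eqref{first}--\eqref{third}), the same $O(\epsilon^{3/2})$, $O(\epsilon^{1/2})$ and $O(\epsilon)$ bounds — in particular the decisive boundary-layer energy estimate via the scaled Dirichlet integral \eqref{new1} — and Cauchy--Schwarz for the cross term, the only cosmetic difference being that the paper performs Cauchy--Schwarz in the full $\cH^\epsilon$ inner product for $\widetilde{\cU}_k = \cU_k - U_k^0$ rather than splitting the gradient and trace parts as you do. Your one unproved assertion, boundedness of $U_k'$ near $P^j$, is true (even reflection across the flat wall plus local elliptic regularity) but also unnecessary: H\"older with $\Vert \nabla X^\epsilon_j ; L^3(\varpi)\Vert = O(1)$ and the embedding $H^1(\varpi)\subset L^6(\varpi)$ already give $\epsilon\Vert (\nabla X^\epsilon_j) U_k' ; L^2(\varpi)\Vert = O(\epsilon)$, which is all the lemma needs.
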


\begin{proof}
Recall that the expression for $\cU_k$, \eqref{calU}, contains the term
$U_k^0$; let us denote $\widetilde \cU_k := \cU_k - U_k^0$. By \eqref{1.25}, \eqref{max-min-1}--\eqref{1.45}, \eqref{var_limit}, we have $\Vert U_k^0 ; \cH^\epsilon \Vert^2 = \|U^0_k;L^2(\gamma)\|^2 + \|\nabla U^0_k; L^2(\varpi)\|^2
= 1 + \Lambda^0_k$. Hence,  by the Cauchy-Schwartz inequality,
\begin{eqnarray}
\label{3.85}
& & \Big| \|{\mathcal{U}}_k; \cH^{\epsilon}\|^2 -1 - \Lambda^0_k \Big|  = 
\Big|  2 (U_k^0 , \widetilde \cU_k )_\epsilon + \Vert \widetilde \cU_k; \cH^\epsilon \Vert^2 \Big|  
\nonumber \\
& \leq & 
2\sqrt{1 + \Lambda_k^0} \Vert  \widetilde \cU_k; \cH^\epsilon \Vert +
\Vert  \widetilde \cU_k; \cH^\epsilon \Vert^2 .
\end{eqnarray}

Taking into account the definition of  the norm of $\cH^\epsilon$, the formula \eqref{calU} and the fact that the functions $\chi_j$ and 
$X^\epsilon_{j}$ vanish on 
$\gamma$ we find that  $\Vert  \widetilde \cU_k; \cH^\epsilon \Vert$ is bounded
by the sum  of the expressions
\begin{eqnarray}
\label{first}
&&\big\|\nabla \big( X^\epsilon_j(U^0_k-U^0_k({P^j}))\big);L^2(\varpi)
\big\| \, , \ j = 0,1, 
\\
\label{second}
&&\big\|\nabla\big( \chi_j w_{kj} (\epsilon^{-1}(x-{P^j}))\big);L^2(\varpi)\big\| \,  \ j=0,1, 
\\
\label{third}
&&\|\epsilon (1-X^\epsilon_0-X^\epsilon_1)U_k'; H^1(\varpi)\|.
\end{eqnarray}

First we use the observation
that the supports of the functions $ X^\epsilon_{j}$ and $\nabla X^\epsilon_{j}$ 
are contained in  balls of radius $O(\epsilon)$ and that $|U_k^0(x)|$ and $|\nabla U_k^0 (x)|$ are uniformly bounded in these balls (Lemma \ref{rem1.6}), hence $U^0_k(x)-U^0_k({P^j})=O(\epsilon)$ there. So, \eqref{first} can be bounded by a constant times
\begin{eqnarray}
\label{3.46}
& & \int_\varpi |U_k^0  -U_k^0 (P_j)|^2  \, |\nabla X^\epsilon_j|^2  dx
+ \int_\varpi |\nabla  U_k^0 |^2 \,  |X^\epsilon_j|^2  dx \nonumber  \\
&\leq &
C \Big( \int_{{\rm supp} X^\epsilon_j } \epsilon^2 \epsilon^{-2} dx 
+ \int_{{\rm supp} X^\epsilon_j }dx \Big)^{1/2} \leq C \epsilon^{3/2}.
\end{eqnarray}

We estimate the terms \eqref{second} using the fact that
the support of the function $\nabla \chi_j$ is contained
in a set $\{ c \leq  |x - P^j| \leq C\} =: {\mathcal S}_j$ for some constants $0 < c < C$ (see above \eqref{defex}), hence, by the estimate \eqref{layer0}--\eqref{new2},
\begin{equation}
\label{3.03}
\big| {w}_{kj}(\epsilon^{-1}(x-P^j))  \big| 
\leq C \epsilon |x - P^j|^{-1} 
\ \ {\rm for}  \ x \in {\mathcal S}_j .
\end{equation}
Applying \eqref{new1} yields
\begin{eqnarray*}
&&\int_\varpi \big|\nabla \big(\chi_j(x)  w_{kj}(\epsilon^{-1}(x-{P^j}))
\big)\big|^2dx  \\
&\leq &
\int_{{\mathcal S}_j}  |w_{kj}(\epsilon^{-1}(x-{P^j}))|^2dx
+ \int_{\varpi} \big| \nabla \big(w_{kj} (\epsilon^{-1}(x-{P^j}))\big)\big|^2dx
\\
&\leq &\int_{{\mathcal S}_j} C \epsilon^2 |x - P^j|^{-2}  dx   + \epsilon^{3}
\epsilon^{-2} \int_{{\mathbb R}^3 }  \big| \nabla_{\xi^j} w_{kj} (\xi^j) \big|^2 
  d \xi^j
  \\
&\leq & C_1 \epsilon .
\end{eqnarray*}
Finally, by Lemma \ref{lem2.1}, \(U_k'\) 
belongs to the space  
\(H^1(\varpi)\). For the terms \eqref{third} we thus get the
bound 
\begin{eqnarray}
& & \|\epsilon (1-X^\epsilon_0-X^\epsilon_1)U_k'; H^1(\varpi)\| \nonumber \\
& \leq &   C \epsilon \|U_k'; H^1(\varpi)\|
+ C \epsilon \max\limits_{j=0,1} 
\Vert \nabla  X^\epsilon_{j} ; L^2(\varpi) \Vert^{1/2} 
\Vert   U_k'  ; L^2(\varpi) \Vert^{1/2}
\leq C' \epsilon , \nonumber
\end{eqnarray}
since $\Vert \nabla  X^\epsilon_{j} ; L^2(\varpi) \Vert \leq C \epsilon^{1/2}$, due to the
measure of the support of $\nabla  X^\epsilon_{j}$.  \ \ $\Box$
\end{proof}

As a corollary of this lemma, if 
$\epsilon\in (0, \epsilon_0]$, then  the bounds 
\begin{equation}
\label{1.52}
0 < \mu_k \leq c_\mu 
\quad \|{\mathcal{U}}; \cH^{\epsilon} \|\geq c_{\mathcal{U}}>0,
\end{equation}
hold true with some positive constants $c_\mu$ and $c_{\mathcal{U}}$ depending
on $\varpi$ and $\theta$ only.

The next theorem provides quite accurate asymptotic information 
on the eigenvalues of the model problem and in particular justifies the ansatz \eqref{ansatz1}.

\begin{theorem}
\label{th4.5}
For every $k\geq 1$ there exists a constant $C_k$
such that, for each $\eta\in[0,2\pi)$,
\begin{equation}
\label{3.101}
|\Lambda_k^\epsilon(\eta)-\Lambda_k^0-\epsilon \Lambda_k'(\eta)|<C_k\epsilon^{3/2}\,,
\end{equation}
where $\Lambda'_k(\eta)=\pi \capas_3(\theta)|U_k^0({P^0})-e^{-i\eta}U_k^0({P^1})|^2$ (cf.\,\eqref{2.20})
in the case the  eigenvalue  $\Lambda_k^0$ is simple and $\Lambda'_k(\eta) $ is  given by  the formulas \eqref{L-mult-1}--\eqref{L-mult-2} in the case $\Lambda_k^0$ is  a multiple eigenvalue. 
\end{theorem}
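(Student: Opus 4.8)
The plan is to apply Lemma \ref{nearEig} to the compact, self-adjoint, positive operator ${\mathcal{B}}^\epsilon(\eta)$ with the approximate eigenpair $(\mu_k, {\mathcal{V}}_k)$ defined in \eqref{use lemma}--\eqref{calU}. The conclusion of that lemma is that the interval $[\mu_k - \tau, \mu_k + \tau]$ contains an eigenvalue of ${\mathcal{B}}^\epsilon(\eta)$, where $\tau = \|{\mathcal{B}}^\epsilon(\eta){\mathcal{V}}_k - \mu_k {\mathcal{V}}_k ; \cH^\epsilon\|$. Since ${\mathcal{V}}_k = \|{\mathcal{U}}_k;\cH^\epsilon\|^{-1}{\mathcal{U}}_k$ and the normalizing factor is bounded below by \eqref{1.52}, the whole estimate reduces to bounding $\|{\mathcal{B}}^\epsilon(\eta){\mathcal{U}}_k - \mu_k {\mathcal{U}}_k ; \cH^\epsilon\|$. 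The key computation is therefore to test the defining identity \eqref{1.26} of ${\mathcal{B}}^\epsilon(\eta)$ against an arbitrary $V \in \cH^\epsilon$: one writes
\begin{equation*}
\big( {\mathcal{B}}^\epsilon(\eta){\mathcal{U}}_k - \mu_k {\mathcal{U}}_k , V \big)_\epsilon
= ({\mathcal{U}}_k, V)_\gamma - \mu_k ({\mathcal{U}}_k, V)_\epsilon,
\end{equation*}
and substitutes the definition \eqref{1.25} of $(\cdot,\cdot)_\epsilon$ together with the explicit form \eqref{calU} of ${\mathcal{U}}_k$. The goal is to show the right-hand side is bounded by $C_k \epsilon^{3/2}\|V;\cH^\epsilon\|$, whence $\tau \leq C_k\epsilon^{3/2}$.

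The central step is thus the \emph{consistency estimate}: showing that ${\mathcal{U}}_k$ approximately solves the spectral equation with residual of order $\epsilon^{3/2}$. Here the terms in the ansatz \eqref{calU} have been designed precisely so that the leading discrepancies cancel. First I would use that $U_k^0$ solves the limit problem \eqref{var_limit}, so its contribution to the residual comes only from the cut-offs $X^\epsilon_j$ and from the boundary condition on $\gamma$; by Lemma \ref{rem1.6} the mismatch $U_k^0 - U_k^0(P^j)$ on the supports of the cut-offs is $O(\epsilon)$, and the supports have measure $O(\epsilon^3)$ with gradients of size $O(\epsilon^{-1})$, which is exactly the bookkeeping already carried out in the proof of the preceding norm lemma (cf.\,\eqref{3.46}). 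The boundary-layer terms $\chi_j w_{kj}$ are harmonic by \eqref{harmonic} and decay like $|\xi^j|^{-1}$ with the stated gradient bounds \eqref{new2}; their role is to correct the quasi-periodicity conditions \eqref{quasiperiodic-1}--\eqref{quasiperiodic-2}, while the coefficients $a_{kj}$ were fixed in \eqref{ab} so that the jump of ${\mathcal{U}}_k$ and of its normal derivative across the apertures vanishes to leading order. The correction term $\epsilon U_k'$, built via Lemma \ref{lem2.1} precisely so that the solvability (Fredholm) condition forces $\Lambda_k'(\eta)$ to take the value \eqref{2.20}, is what removes the $O(\epsilon)$ residual on $\gamma$; its defining problem \eqref{U'-eq}--\eqref{U'-b-cond-2} was set up so that $-\Delta U_k' = \Delta W_k$ cancels the harmonic-tail singularities collected in $W_k$ from \eqref{2.19ce}. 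I would then collect all residual contributions, estimate each via Cauchy--Schwarz and the support/measure bounds, and verify that every surviving term is $O(\epsilon^{3/2})$.

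Once $\tau \leq C_k\epsilon^{3/2}$ is established, Lemma \ref{nearEig} guarantees an eigenvalue $M^\epsilon_{m}(\eta)$ of ${\mathcal{B}}^\epsilon(\eta)$ within $C_k\epsilon^{3/2}$ of $\mu_k = (1+\Lambda_k^0 + \epsilon\Lambda_k'(\eta))^{-1}$. The final step is to convert this back through the relation \eqref{1.35}, $\Lambda = M^{-1} - 1$, and to confirm that the near-eigenvalue is in fact the $k$-th one. Using the smoothness of $x \mapsto x^{-1}-1$ away from $0$ and the lower bound $\mu_k \geq$ const from \eqref{1.52}, the $O(\epsilon^{3/2})$ distance in $M$-variables transfers to an $O(\epsilon^{3/2})$ distance in $\Lambda$-variables, yielding \eqref{3.101}. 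The main obstacle I anticipate is not any single estimate but the matching of indices: Lemma \ref{nearEig} by itself only produces \emph{some} eigenvalue near $\mu_k$, and one must argue that it is precisely $\Lambda_k^\epsilon(\eta)$. This requires combining the present upper-type bound with the ordering already supplied by Theorem \ref{th3.1}, and, in the degenerate case, running the construction with an $m$-dimensional space of approximate eigenvectors (one for each $U^0_{k+j}$) so that the global counting respects the multiplicity $m$; the linear-algebra input is the rank-one matrix $B = \overline{v}v^\top$ from Section \ref{sec:3.2}, whose single nonzero eigenvalue gives \eqref{L-mult-1} while the remaining $\Lambda_{k+j}'(\eta) = 0$ for $j \geq 1$ via \eqref{L-mult-2}. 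Getting this separation/counting argument watertight, especially when several bands are close, is the delicate part.
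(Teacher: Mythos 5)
Your proposal takes essentially the same route as the paper's own proof: Lemma \ref{nearEig} applied to ${\mathcal{B}}^\epsilon(\eta)$ with the approximate pair \eqref{use lemma}--\eqref{calU}, the residual bound $\tau\leq C_k\epsilon^{3/2}$ obtained through the same cancellations (the limit problem annihilates the $U_k^0$-residual, the $U_k'$-problem \eqref{U'-eq}--\eqref{U'-b-cond-2} absorbs the $O(\epsilon)$ terms including the harmonic tails collected in $W_k$, and the boundary-layer remainders $\widetilde{w}_{kj}$ contribute only $O(\epsilon^2)$ via \eqref{new2}), followed by conversion through \eqref{1.35} and identification of the index by means of Theorem \ref{th3.1}. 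The index-matching subtlety you flag for multiple eigenvalues is real, and the paper itself treats it only implicitly through Theorem \ref{th3.1}; your suggested remedy (an $m$-dimensional family of approximate eigenvectors consistent with Section \ref{sec:3.2}) is the standard way to complete that point.
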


\begin{proof}
We apply Lemma \ref{nearEig} to the operator ${\mathcal{B}}^\epsilon (\eta)$
with $\mu = \mu_k $ and ${\mathcal{V}} = {\mathcal{V}}_k$ as in \eqref{use lemma}. Our aim is to show that 
$\tau$ of the lemma can be chosen as small as $C_k \epsilon^{3/2}$. The lemma then gives an eigenvalue $M (\epsilon,\eta)$ of ${\mathcal{B}}^\epsilon (\eta)$ with 
the estimate
\begin{equation}
|M(\epsilon,\eta) - \mu_k | \leq C_k \epsilon^{3/2} . \nonumber
\end{equation}
Using \eqref{use lemma} and \eqref{1.35} this turns into an eigenvalue
$\lambda(\epsilon , \eta)$ (of \eqref{model}--\eqref{quasiperiodic-2}) satisfying \eqref{3.101} in the place of $\Lambda_k^\epsilon(\eta)$. However, if $\epsilon$
is small enough, Theorem \ref{th3.1} guarantees that in a neighbourhood of 
$\Lambda_k^0$ there is only one eigenvalue of the model problem, namely
$\Lambda_k^\epsilon(\eta)$. So $\lambda(\epsilon , \eta)$ must coincide with
it, and the estimate \eqref{3.101} follows.  

We are thus left with the task of proving
\begin{equation*}
\tau=\|{\mathcal{B}}^\epsilon(\eta){\mathcal{V}}_k- \mu_k  {\mathcal{V}}_k; \cH^\epsilon\|\,
\leq C_k \epsilon^{3/2}.
\end{equation*}
To this end  we write, using ${\mathcal{V}}_k = c_{\cU}^{-1} \cU_k $, \eqref{1.26}, \eqref{1.25}, \eqref{use lemma}, \eqref{1.52},
\begin{eqnarray}
\tau& =& \sup_Z 
\big|({\mathcal{B}}^\epsilon(\eta){\mathcal{V}}_k- \mu_k  {\mathcal{V}}_k, Z)_{\epsilon} \big| \nonumber \\
& = & c_{\cU_k}^{-1} \sup_Z \big| ( \cU_k , Z)_\gamma - \mu_k  ({\mathcal{U}}_k, Z)_{\gamma}
- \mu_k ( \nabla \cU_k , \nabla Z)_\varpi \big| \nonumber \\
& \leq  & c_\mu c_{\mathcal{U}}^{-1}
\sup_Z \big| (\Lambda^0_k +
\epsilon \Lambda'_k)({\mathcal{U}}_k, Z)_{\gamma} -(\nabla {\mathcal{U}}_k, \nabla Z)_{\varpi} \big|  =:  c_\mu c_{\mathcal{U}}^{-1}
\sup_{Z}|T(Z)|\, .
\end{eqnarray}
The supremum is calculated here over all functions $Z\in \cH^{\epsilon}$
with unit norm. The  expression $T(Z)$ can be represented as a sum of the terms
\begin{eqnarray*}
S_1(Z)&=&-(\nabla U^0_k,\nabla Z)_\varpi + \Lambda^0_k (U^0_k, Z)_\gamma,  \\
S_{2j}(Z)&=& -\left(\nabla (X^\epsilon_j (U_k^0({P^j})-U_k^0)), \nabla Z\right)_\varpi, \\
S_3(Z)&=& - \big(\nabla \big(\chi_0 w_{k0}(\epsilon^{-1}(x -P^0))+\chi_1 w_{k1}(\epsilon^{-1}(x -P^0))\big),\nabla Z \big)_\varpi \\
& + & \epsilon(\nabla  w_k, \nabla Z)_\varpi,\\
S_4(Z)&=& -\epsilon(\nabla U'_k,\nabla Z)_\varpi -\epsilon(\nabla  w_k, \nabla Z)_\varpi \\
   &+& \epsilon \Lambda'_k (U^0_k, Z)_\gamma+\epsilon \Lambda^0_k (U'_k, Z)_\gamma, \nonumber \\
S_5(Z)&=&-\epsilon(\nabla ((X^\epsilon_0+X^\epsilon_1) U'_k),\nabla Z)_\varpi+\epsilon^2\Lambda'_k(U'_k,Z)_\gamma.
\end{eqnarray*} 
where  $w_k$ is given by
\begin{equation}
\label{3.33}
 w_k(x)=\chi_0(x) \frac{a_{k0}\capas_3(\theta)}{|x-{P^0}|}
 + \chi_1(x)\frac{a_{k1}\capas_3(\theta)}{|x-{P^1}|}\,.
\end{equation}
 
First we note that $S_1(Z)=S_4(Z)=0$, because $U_k^0$ and $U_k'$ are the solutions of the problems \eqref{model-0}--\eqref{model-0-b-cond-2} and \eqref{U'-eq}--\eqref{U'-b-cond-2}, respectively; see also \eqref{2.20ac}.

To estimate $S_{2j}(Z)$ we use the Cauchy-Schwartz inequality:
\begin{eqnarray*}
& & \big| \big(\nabla (X^\epsilon_j (U_k^0({P^j})-U_k^0)), \nabla Z \big)_\varpi \big|
 \\ 
& \leq&
\big\|\nabla(X^\epsilon_j(U_k^0({P^j})-U_k^0)); L^2(\varpi) \big\| \|Z; H^1(\varpi)\| \leq C_k\epsilon^{3/2}.
\end{eqnarray*}
Here the last inequality follows from an estimate already made for 
\eqref{first} (see \eqref{3.46}) and the assumption on the norm of $Z$.

The first term in $S_5(Z)$ can also be treated using the Cauchy-Schwartz inequality,
the properties of the cut-off functions $X^\epsilon_j$ (once again as in \eqref{3.46}) and Lemma \ref{lem2.1}
\begin{eqnarray*}
& & \epsilon \big| \big(\nabla ((X^\epsilon_0+X^\epsilon_1 )U_k' ), \nabla Z \big)_\varpi \big|
 \\ 
& \leq& \epsilon
\big\|\nabla((X^\epsilon_0+X^\epsilon_1 )U_k' ); L^2(\varpi) \big\| \|Z; H^1(\varpi)\|  \\
& \leq &  \epsilon
\big\|\nabla(X^\epsilon_0+X^\epsilon_1 ); L^2(\varpi) \big\| 
\big\|U_k' ; L^2(\varpi) \big\| \\
& \ \ +& \epsilon
\big\|X^\epsilon_0+X^\epsilon_1 ; L^2(\varpi) \big\|
\big\|\nabla U_k' ; L^2(\varpi) \big\|
\leq C_k\epsilon^{3/2}.
\end{eqnarray*}
The surface integral in \(S_5(Z)\) can be estimated simply by the trace inequality. 

To provide an upper bound for $S_3(Z)$ we notice that by \eqref{layer0},
\begin{equation}
\label{3.37}
w_{kj} (\epsilon^{-1}(x-P^j)) 
= \epsilon \frac{a_{kj}\capas_3(\theta)}{|x-{P^j}|}
+ \widetilde w_{kj}(\epsilon^{-1}(x-P^j)) \, , \ j=0,1,
\end{equation}
hence, using \eqref{3.33} we can write
\begin{equation*}
S_3(Z) =-\sum_{j=0}^1 \big( \nabla \big(\chi_j \widetilde{w}_{kj}(\epsilon^{-1}(x-P^j) )\big), \nabla Z\big)_\varpi.
\end{equation*}
After integrating by parts and taking into account that $\widetilde{w}_{kj}$
are harmonic functions we obtain
\begin{eqnarray}
S_3(Z)& = &  \sum_{j=0}^1 \big( \widetilde{w}_{kj}(\epsilon^{-1}(x-P^j))\Delta\chi_j , Z \big)_\varpi \nonumber \\
\label{3.68}
& + & 2 \big( (\nabla\chi_j )\nabla \big( \widetilde{w}_{kj} (\epsilon^{-1}(x-P^j))\big), Z \big)_\varpi.  
\end{eqnarray}

In the second term, the support of the function $\nabla \chi_j$ is contained
in a set $\{ c \leq  |x - P^j| \leq C\} =: {\mathcal S}_j$ for some constants $0 < c < C$, hence, by the estimate \eqref{new2}, 
\begin{equation}
\label{3.57a}
\big| \nabla  \big( \widetilde{w}_{kj}(\epsilon^{-1}(x-P^j)) \big) \big| 
\leq C \epsilon^{-1} \big( \epsilon^{-1} |x - P^j| \big)^{-3} 
= C\epsilon^{2} |x - P^j|^{-3} 
\ \ {\rm for}  \ x \in {\mathcal S}_j .
\end{equation}
Hence,
\begin{eqnarray}
& & \Big| \big( (\nabla\chi_j )\nabla \big( \widetilde{w}_{kj} (\epsilon^{-1}
(x-P^j))\big), Z \big)_\varpi \Big| \nonumber \\
& \leq & \Big( \int_{{\mathcal S}_j} \big| \nabla  \big( \widetilde{w}_{kj}
(\epsilon^{-1}(x-P^j) \big) 
\big|^{2} dx  \Big)^{1/2} \Vert Z ; L^2(\varpi)\Vert 
\leq  C' \epsilon^{2}.
\end{eqnarray}
The first term in \eqref{3.68} is treated with a similar argument,
since ${\mathcal S}_j$ still contains the support of $\Delta \chi_j$ 
and the estimate
\begin{equation*}
\big|  \widetilde{w}_{kj}(\epsilon^{-1}(x-P^j))  \big| 
\leq C\epsilon^{2} |x - P^j|^{-3} 
\ \ {\rm for}  \ x \in {\mathcal S}_j 
\end{equation*}
again holds, by \eqref{new2}.

We thus get the bound 
\[
\tau\leq C_k(\theta) \epsilon^{3/2} . \ \ \Box
\] 
\end{proof}

As a consequence we can now provide the asymptotic widths and positions of the spectral bands.

\begin{theorem}
\label{cor4.6}
Let the index $k$ be such that the  eigenvalue $\Lambda_k^0$ is simple. Then, 
the band $\Upsilon_k^\epsilon$ of the 
continuous spectrum of the problem \eqref{problem}--\eqref{b-cond} has the asymptotic form
\begin{equation*}
\Upsilon_k^\epsilon= [\Lambda_k^0+A_k\epsilon+O(\epsilon^{3/2}), \Lambda_k^0+B_k\epsilon+O(\epsilon^{3/2})]
\end{equation*}
where
\begin{eqnarray}
A_k=\pi \capas_3(\theta) \min\{|U_k^0({P^0})-U_k^0({P^1})|^2, |U_k^0({P^0})+U_k^0({P^1})|^2\}, \\
B_k=\pi \capas_3(\theta) \max\{|U_k^0({P^0})-U_k^0({P^1})|^2, |U_k^0({P^0})+U_k^0({P^1})|^2\}. 
\end{eqnarray}                                                                        
\end{theorem}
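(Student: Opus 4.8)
The plan is to combine Theorem \ref{th4.5} with the explicit $\eta$-dependence of $\Lambda_k'(\eta)$ and the fact that the band $\Upsilon_k^\epsilon$ is the image of the continuous map $\eta \mapsto \Lambda_k^\epsilon(\eta)$ over $\eta \in [0,2\pi)$. Since $\Upsilon_k^\epsilon$ is a closed connected segment (see the discussion around \eqref{sets}), its endpoints are precisely $\min_\eta \Lambda_k^\epsilon(\eta)$ and $\max_\eta \Lambda_k^\epsilon(\eta)$. By Theorem \ref{th4.5} we have $\Lambda_k^\epsilon(\eta) = \Lambda_k^0 + \epsilon \Lambda_k'(\eta) + O(\epsilon^{3/2})$ uniformly in $\eta$, so the first task is to compute the extrema over $\eta$ of the leading correction term $\Lambda_k'(\eta)$; the uniformity of the remainder will then let me transfer these extrema to the endpoints of $\Upsilon_k^\epsilon$ with an error of the stated order.

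First I would analyze the function $\eta \mapsto \Lambda_k'(\eta) = \pi\,\capas_3(\theta)\,|U_k^0(P^0) - e^{-i\eta} U_k^0(P^1)|^2$ from \eqref{2.20}. Expanding the modulus squared gives
\begin{equation*}
\Lambda_k'(\eta) = \pi\,\capas_3(\theta)\Bigl( |U_k^0(P^0)|^2 + |U_k^0(P^1)|^2 - 2\,\mathrm{Re}\bigl( e^{i\eta} U_k^0(P^0)\overline{U_k^0(P^1)} \bigr)\Bigr).
\end{equation*}
Since the eigenfunctions $U_k^0$ of the real limit problem \eqref{model-0}--\eqref{model-0-b-cond-2} may be taken real-valued, the cross term reduces to $-2\cos\eta\, U_k^0(P^0) U_k^0(P^1)$, and as $\eta$ ranges over $[0,2\pi)$ the quantity $\cos\eta$ sweeps the full interval $[-1,1]$. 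Hence the extreme values of $\Lambda_k'(\eta)$ are attained at $\eta = 0$ and $\eta = \pi$, giving exactly $\pi\capas_3(\theta)|U_k^0(P^0) \mp U_k^0(P^1)|^2$. Taking the smaller and larger of these two numbers yields precisely the constants $A_k$ and $B_k$ of the statement, so that $\min_\eta \Lambda_k'(\eta) = A_k$ and $\max_\eta \Lambda_k'(\eta) = B_k$.

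It remains to pass from the extrema of the leading term to the extrema of $\Lambda_k^\epsilon$ itself. Writing $\Lambda_k^\epsilon(\eta) = \Lambda_k^0 + \epsilon\Lambda_k'(\eta) + R_k^\epsilon(\eta)$ with $|R_k^\epsilon(\eta)| \le C_k \epsilon^{3/2}$ uniformly in $\eta$ by Theorem \ref{th4.5}, I would bound the minimum over $\eta$ from above by evaluating at the minimizer $\eta_{\min}$ of $\Lambda_k'$ and from below by using the uniform estimate on $R_k^\epsilon$; the same two-sided argument handles the maximum. This shows
\begin{equation*}
\min_\eta \Lambda_k^\epsilon(\eta) = \Lambda_k^0 + A_k\epsilon + O(\epsilon^{3/2}), \qquad \max_\eta \Lambda_k^\epsilon(\eta) = \Lambda_k^0 + B_k\epsilon + O(\epsilon^{3/2}),
\end{equation*}
which are exactly the two endpoints of $\Upsilon_k^\epsilon$. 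The main subtlety, and the step I would watch most carefully, is the uniformity of the remainder in Theorem \ref{th4.5}: the argument is valid only once $\epsilon$ is smaller than a threshold that is uniform in $\eta$, which is guaranteed because the constant $C_k$ there does not depend on $\eta$ and because the separation of $\Lambda_k^0$ from neighbouring eigenvalues used to identify the near-eigenvalue is also $\eta$-independent. With that uniformity in hand, the conclusion follows directly, and the rest is the elementary trigonometric optimization carried out above.
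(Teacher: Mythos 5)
Your proposal is correct and is exactly the argument the paper intends: Theorem~\ref{cor4.6} is stated there as an immediate consequence of Theorem~\ref{th4.5} with no written proof, and your write-up supplies precisely the missing steps --- the elementary optimization of $\Lambda_k'(\eta)=\pi\capas_3(\theta)\,|U_k^0(P^0)-e^{-i\eta}U_k^0(P^1)|^2$ over $\eta$ (using that the simple eigenfunction may be taken real, so the extrema occur at $\eta=0,\pi$ and give $A_k$, $B_k$), combined with the $\eta$-uniform remainder bound and the fact from \eqref{sets} that $\Upsilon_k^\epsilon$ is a closed segment whose endpoints are $\min_\eta$ and $\max_\eta$ of $\Lambda_k^\epsilon(\eta)$. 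Your attention to the uniformity in $\eta$ of both $C_k$ and the $\epsilon$-threshold (the latter resting on the $\eta$-independent separation from Theorem~\ref{th3.1}) is exactly the right point to flag, and the argument is complete.
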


\begin{remark} \rm
Returning to the band-gap structure of the Bloch spectrum \eqref{eq1.1}, in general it may happen
that a spectral band, closed interval,  degenerates into a single point. In this case the band consists of a 
single eigenvalue of infinite multiplicity, and the band is thus contained in the essential but not in
the continuous spectrum. However, by Theorem \ref{cor4.6}, if $k$ is such that both $U_k^0({P^0})$ and $U_k^0({P^1})$ are nonzero
and $\epsilon$ is small enough, the numbers $A_k$ and $B_k$ are distinct,
and in this case the spectral band $\Upsilon_k^\epsilon$ is  indeed an interval with positive length.  
This obviously provides a way to construct examples where the spectrum of the linear water-wave problem has a genuine band-gap structure with proper intervals as bands and with at least a given number of  spectral gaps, cf. Theorem \ref{cor3.2}. 
\end{remark}

\newpage
~

\end{document}